\numberwithin{equation}{section}
\newtheorem{theorem}{Theorem}[section]
\newtheorem{lemma}[theorem]{Lemma}
\newtheorem{proposition}[theorem]{Proposition}
\newtheorem{remark}[theorem]{Remark}
\title[Fractional anisotropic Calder\'on problem]{The fractional anisotropic Calder\'on problem for a nonlocal parabolic equation on closed Riemannian manifolds}
\author[Y.-H.~Lin]{Yi-Hsuan Lin}
\address{Department of Applied Mathematics, National Yang Ming Chiao Tung University, Hsinchu, Taiwan \& Fakult{\"a}t f{\"u}r Mathematik, University of Duisburg-Essen, Essen, Germany}
\email{yihsuanlin3@gmail.com}
\newcommand{\R}{{\mathbb R}}
\newcommand{\N}{{\mathbb N}}
\newcommand {\p} {\partial}
\newcommand {\dVg} {\mathsf{dV}_g}
\newcommand{\LC}{\left(}
\newcommand{\RC}{\right)}
\newcommand{\wt}{\widetilde}
\newcommand{\norm}[1]{\lVert #1 \rVert}
\newcommand{\abs}[1]{\left\lvert #1 \right\rvert}
\DeclareMathOperator{\supp}{supp} 
\begin{document}

	\maketitle
	\begin{abstract}
		We consider the fractional anisotropic Calder\'on problem for the nonlocal parabolic equation $(\partial_t -\Delta_g)^s u=f$ ($0<s<1$) on closed Riemannian manifolds. More concretely, we can determine the Riemannian manifold $(M,g)$ up to isometry by using the local source-to-solution map in an arbitrarily small open cylinder in the spacetime domain. This can be regarded as a nonlocal analog of the anisotropic Calder\'on problem in the parabolic setting. We also study several useful properties for nonlocal parabolic operators by using comprehensive spectrum analysis with semigroup theory.

		\medskip
		
		\noindent{\bf Keywords.} Ansotropic Calder\'on's problem, nonlocal parabolic operators, closed Riemannian manifolds, heat kernel
		
		\noindent{\bf Mathematics Subject Classification (2020)}: 26A33, 35R30.

	\end{abstract}

	\tableofcontents
	
	\section{Introduction}\label{sec: introduction}
	
	In 1980, A. P. Calder\'on \cite{calderon} proposed a problem: Is that possible to recover the (anisotropic) conductivity in a medium by using its voltage and current measurements on the boundary? This problem is often referred to as the \emph{anisotropic Calder\'on problem}, which can be reformulated in the geometrical setting, see \cite{LU1989}. In other words, can one recover $(M,g)$ by using the boundary measurements (usually, researchers utilize either the Dirichlet-to-Neumann map or the Neumann-to-Dirichlet map)? In the works \cite{LU1989,LU01,LLS2020_Poisson}, the authors studied the anisotropic Calder\'on problem on real-analytic manifolds with real-analytic metrics. However, without using analyticity assumptions, the anisotropic Calder\'on problem is a well-known long-standing open problem in the inverse problems community for the spatial dimension $n\geq 3$. 
	
	Recently, in the works \cite{feizmohammadi2021fractional,feizmohammadi2021fractional_closed,FKU2024calder}, the authors investigated the fractional anisotropic Calder\'on problem on closed connected smooth Riemannian manifolds, which they used the local source-to-solution map as measurement so that they can determine the manifold up to isometry. This type of result is different from that of their local counterparts. On the one hand, the fractional anisotropic Calder\'on problem can be resolved. On the other hand, the result holds for any spatial dimension $n\geq2$, without additional analytic assumptions on manifolds $(M,g)$.
	The study of fractional inverse problems has been an attractive research field. Thanks to the pioneering work \cite{GSU20}, the authors proved two important materials: The strong uniqueness and Runge approximation. Motivated by the work \cite{GSU20}, there are many articles studied fractional type inverse problems, such as \cite{GRSU20,cekic2020calderon,CLL2017simultaneously,CMR20,CMRU20,GLX,CGRU2023reduction,GU2021calder,HL19_monotone1,HL20_monotone2,LL2020inverse,LL2022inverse,LLR2019calder,LLU2022para,LLU2023calder,lin2020monotonicity,lin2023determining,ruland2023revisiting,ruland2018exponential,RS17,KLW2021calder,KRZ-2023,KLZ-2022,LZ2024approximation,LZ2023unique,LTZ24_optimal,LTZ24_well-posed,LNZ_Calderon} and references therein.\\

	The mathematical model in this paper is given as follows.
	Let $(M,g)$ be a smooth closed Riemannian manifold of dimension $\dim M\geq 2$, and $-\Delta_g$ be the positive Laplace-Beltrami operator defined on $M$, i.e., $g=g(x)\in C^\infty(M)$ satisfies the classical ellipticity condition (we do not address more in this paper). A closed manifold is compact without boundary. Given $s\in (0,1)$ and $T>0$, let 
	\[
	\mathcal{H}:= \p_t -\Delta_g , 
	\]
	and we consider the fractional parabolic equation
	\begin{equation}\label{equ: frac para}
		\begin{cases}
			\mathcal{H}^s u = f &\text{ in }M\times (-T,T) \\
			u=0 &\text{ in }M\times \{t\leq -T\},
		\end{cases}
	\end{equation}
	where the fractional parabolic operator $\mathcal{H}^s$ can be defined by the spectrum theory (see Section \ref{sec: prel}). The well-posedness of \eqref{equ: frac para} holds for any $f$ in a suitable function space with the compatibility condition 
	\begin{equation}\label{compatibility condition}
		f=0 \text{ in } M\times \{t\leq -T\}.
	\end{equation}
	Let $\mathcal{O}\subset M$ be an open set, by the well-posedness of \eqref{equ: frac para}, we can define the local source-to-solution map $S_{M,g,\mathcal{O}}$ of \eqref{equ: frac para} via 
	\begin{equation}\label{local source-to-solution map}
		S_{M,g,\mathcal{O}}: C^\infty_c(\mathcal{O}\times (-T,T)) \ni f \mapsto \left. u^f \right|_{\mathcal{O}\times (-T,T)} \in \mathbf{H}^s(\mathcal{O}\times (-T,T)),
	\end{equation}
	where $u^f\in \mathbf{H}^s(M\times \R)$ is the solution to \eqref{equ: frac para}. Here $\mathbf{H}^s(M\times \R)$ is a suitable function space given Section \ref{sec: prel}.

	Recall that $-\Delta_g$ is a nonnegative linear operator on the Hilbert space $L^2(M)$. Since $M$ is a connected closed smooth Riemannian, it is known that $-\Delta_g$ possesses a discrete spectrum, which can be denoted as an eigenpairs $\left\{ \LC \lambda_k,\phi_k\RC\right\}_{k\geq 0}$, such that $0=\lambda_0 <\lambda_1 \leq \lambda_2 \leq \ldots \nearrow \infty$ (up to normalization) and $\left\{  \phi_k \right\}_{k\geq 0}$ is an orthonormal basis in $L^2(M)$. Notice that these eigenvalues $\lambda_k$ may not be distinct. The zeroth eigenfunction $\phi_0\equiv 1$ in $M$. This implies that 
	\begin{equation}\label{ortho eigen}
		\LC \phi_k , 1 \RC_{L^2(M)}=\LC \phi_k , \phi_0\RC_{L^2(M)}=0, \quad \text{for all }k\in \N.
	\end{equation}
	Furthermore, it is known that the fractional Laplace-Beltrami operator can be defined by 
	\begin{equation}
		\LC -\Delta_g \RC^s u (x)=\sum_{k=1}^\infty \lambda_k^s \LC u,\phi_k \RC_{L^2(M)}\phi_k(x) \text{ for }x \in M,
	\end{equation}
	where we use $\lambda_0=0$, and $\LC  \varphi , \psi \RC_{L^2(D)}:=\int_{D} \varphi\overline{\psi}\, \dVg$, for all $\varphi,\psi \in L^2(M)$. Here $D$ can be any subset of $M$, and $\dVg$ is the Riemannian volume element of $(M,g)$ with $\overline{\psi}$ being the complex conjugate of the (complex) function $\psi$. Similar $L^2$ integral formula notation will be utilized for any subsets in the space-time domain $M\times \R$.		
	Note that $\LC (-\Delta_g)^s u , 1 \RC_{L^2(M)}=0$, for any $u\in \mathrm{Dom}((-\Delta_g)^s)$, where we used \eqref{ortho eigen} and $\mathrm{Dom}((-\Delta_g)^s):=\big\{ u\in L^2(M): \, \sum_{k=0}^\infty \lambda_k^{2s} | ( u,\phi_k)_{L^2(M)}|^2 <\infty\big\}$.

	Motivated by the above observations, the nonlocal parabolic operator $\mathcal{H}^s$ can be defined spectrally. More precisely, given any function $u=u(x,t)\in L^2(M\times \R)$, one can write
	\begin{equation}
		\begin{split}
			u(x,t)= \sum_{k=0}^\infty u_k(t)\phi_k(x) =\frac{1}{\sqrt{2\pi}} \int_{\R} \sum_{k=0}^\infty  \widehat{u_k}(\rho)\phi_k (x)e^{\mathsf{i}t\rho}\, dt,
		\end{split}
	\end{equation}
	where $\mathsf{i}=\sqrt{-1}$. Here $u_k(t)$ and $\widehat{u_k}(\rho)$ (the time-Fourier transform of $u_k(t)$) are given by 
	\begin{equation}\label{u_k and its Fourier transf}
		\begin{split}
			u_k(t):=\LC u(\cdot,t),\phi_k(\cdot) \RC_{L^2(M)}\quad \text{and}\quad 	\widehat{u_k}(\rho):=\frac{1}{\sqrt{2\pi}} \int_{\R} e^{-\mathsf{i}\rho t}u_k (t)\, dt,
		\end{split}
	\end{equation}
	respectively, for $k\geq 0$. The fractional parabolic operator $\mathcal{H}^s$ is (formally) defined by the pairing 
	\begin{equation}
		\left\langle  \mathcal{H}^s u , v \right\rangle= \int_{\R} \sum_{k=0}^\infty \left( \mathsf{i}\rho +\lambda_k\right)^{s} \widehat{u_k}(\rho) \overline{\widehat{v_k}(\rho)}\, d\rho ,
	\end{equation}
	where we will justify the above pairing in Section \ref{sec: prel} under appropriate sense with suitable function spaces.
	We will introduce more properties of the operator $\mathcal{H}^s$ in Section \ref{sec: prel}. Our main result to determine $(M,g)$ (up to isometry) by using the local source-to-solution map \eqref{local source-to-solution map} is stated in the forthcoming theorem:

	\begin{theorem}\label{thm: main}
		Given $s\in (0,1)$, let $(M_1,g_1)$ and $(M_2,g_2)$ be closed connected smooth Riemannian manifolds of $\dim M_1= \dim M_2\geq 2$. Let $\mathcal{O}_j \subset M_j$ be a nonempty open sets (for $j=1,2$) such that 
		\begin{equation}\label{same local information}
			\LC \mathcal{O}_1, \left. g_1 \right|_{\mathcal{O}_1} \RC  =\LC \mathcal{O}_2, \left. g_2 \right|_{\mathcal{O}_2} \RC :=\LC \mathcal{O},  g  \RC .
		\end{equation}
		Suppose that 
		\begin{equation}\label{same local source to solution}
			S_{M_1,g_1,\mathcal{O}_1}(f)= S_{M_2,g_2, \mathcal{O}_2}(f), \text{ for any }f\in C^\infty_c(\mathcal{O}\times (-T,T)),
		\end{equation}
		where $S_{M_j,g_j,\mathcal{O}_j}$ denotes the local source-to-solution map of 
		\begin{equation}
			\begin{cases}
				\LC \p_t -\Delta_{g_j}\RC^s u_j =f &\text{ in }M\times (-T,T), \\
				u_j=0  &\text{ in }M\times \{t\leq -T\},
			\end{cases}
		\end{equation}
		for $j=1,2$. Then there exists a diffeomorphism $\Phi: M_1\to M_2$ such that $\Phi^\ast g_2=g_1$ in $M_1$ $\Phi=\mathrm{Id}$ (the identity map) on $\mathcal{O}$.
	\end{theorem}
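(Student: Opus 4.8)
\emph{Overview.} The plan is to use the semigroup/spectral calculus of Section~\ref{sec: prel} to make the solution of \eqref{equ: frac para} completely explicit in terms of the heat kernel of $(M,g)$, to deduce from \eqref{same local source to solution} that the two heat kernels coincide on the common region $\mathcal{O}$ for all times, and then to invoke the geometric rigidity already available in this setting. For the first point I would use the Balakrishnan/subordination identity $(\mathsf{i}\rho+\lambda_k)^{-s}=\tfrac{1}{\Gamma(s)}\int_0^\infty \tau^{s-1}e^{-\tau(\mathsf{i}\rho+\lambda_k)}\,d\tau$ (valid since $\mathrm{Re}(\mathsf{i}\rho+\lambda_k)\geq 0$) together with the spectral/time-Fourier definition of $\mathcal{H}^s$ to show that the solution of \eqref{equ: frac para} is
\[
u^f(x,t)=\frac{1}{\Gamma(s)}\int_0^\infty \tau^{s-1}\int_M p_g(\tau,x,y)\,f(y,t-\tau)\,\dVg(y)\,d\tau ,
\]
where $p_g(\tau,x,y)=\sum_{k\geq 0}e^{-\tau\lambda_k}\phi_k(x)\phi_k(y)$ is the heat kernel; the condition $u^f=0$ for $t\leq -T$ is then automatic because the integrand vanishes there by \eqref{compatibility condition}. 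In particular $S_{M,g,\mathcal{O}}$ is the space--time Volterra operator on $\mathcal{O}\times(-T,T)$ with kernel $\tfrac{(t-\sigma)_+^{s-1}}{\Gamma(s)}\,p_g(t-\sigma,x,y)$, $x,y\in\mathcal{O}$.

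\emph{Reduction to equality of heat kernels.} Identifying $(\mathcal{O}_1,g_1|_{\mathcal{O}_1})$ with $(\mathcal{O}_2,g_2|_{\mathcal{O}_2})$ via \eqref{same local information} --- so that $\dVg$ is unambiguous on $\mathcal{O}$ --- the hypothesis \eqref{same local source to solution} and the formula above force
\[
\int_0^\infty \tau^{s-1}\int_{\mathcal{O}}\big(p_{g_1}(\tau,x,y)-p_{g_2}(\tau,x,y)\big)f(y,t-\tau)\,\dVg(y)\,d\tau=0
\]
for every $f\in C^\infty_c(\mathcal{O}\times(-T,T))$ and every $(x,t)\in\mathcal{O}\times(-T,T)$. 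Testing against product sources $f(y,\sigma)=a(y)b(\sigma)$ and noting that only times $\tau\in(0,2T)$ enter, I would conclude $p_{g_1}(\tau,x,y)=p_{g_2}(\tau,x,y)$ for $\tau\in(0,2T)$ and $x,y\in\mathcal{O}$. Since $\tau\mapsto p_{g_j}(\tau,x,y)$ is real-analytic on $(0,\infty)$, analytic continuation in $\tau$ then yields $p_{g_1}=p_{g_2}$ on $(0,\infty)\times\mathcal{O}\times\mathcal{O}$.

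\emph{Geometric conclusion.} It remains to show that $p_{g_1}=p_{g_2}$ on $(0,\infty)\times\mathcal{O}\times\mathcal{O}$, together with $g_1|_{\mathcal{O}}=g_2|_{\mathcal{O}}$, forces a diffeomorphism $\Phi:M_1\to M_2$ with $\Phi^\ast g_2=g_1$ and $\Phi=\mathrm{Id}$ on $\mathcal{O}$. This data is equivalent to the ``interior spectral data on $\mathcal{O}$'': inspecting the large-$\tau$ (equivalently Laplace-transform) behaviour of $p_{g_j}(\tau,x,y)$ recovers $\lambda_k^{(1)}=\lambda_k^{(2)}$ and the equality of the spectral projection kernels $\sum_{\lambda_\ell^{(j)}=\lambda}\phi_\ell^{(j)}(x)\phi_\ell^{(j)}(y)$ on $\mathcal{O}\times\mathcal{O}$ --- the relevant residues being nontrivial there because a nonzero Laplace eigenfunction on a connected manifold cannot vanish on the open set $\mathcal{O}$. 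Equivalently, by the subordination formula the heat kernel on $\mathcal{O}\times\mathcal{O}$ determines the local source-to-solution map of $(-\Delta_{g_j})^s$ on $\mathcal{O}$; in either formulation one concludes by invoking \cite{feizmohammadi2021fractional_closed,FKU2024calder} (or the solution of the Gel'fand inverse spectral problem via the boundary control method), which delivers exactly such a $\Phi$.

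\emph{Main difficulty.} The essential content lies in the last step --- recovering the global Riemannian structure from data supported on the open set $\mathcal{O}$. Without analyticity assumptions this relies on Tataru's sharp unique continuation theorem and the boundary (``interior observation'') control method, and care is needed so that no eigenvalue multiplicity is lost when the spectral projections are restricted to $\mathcal{O}\times\mathcal{O}$. By contrast, the two preceding steps are where the nonlocal parabolic nature of $\mathcal{H}^s$ is actually used, yet they are soft once the semigroup calculus of Section~\ref{sec: prel} is in hand; an alternative to the explicit-kernel computation would be to run a unique continuation plus Runge approximation argument for $\mathcal{H}^s$ as developed there.
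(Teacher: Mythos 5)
Your proposal is correct and shares the paper's overall skeleton---reduce the hypothesis \eqref{same local source to solution} to the heat-kernel identity \eqref{same heat kernel} on $\mathcal{O}\times\mathcal{O}$ for all $\tau>0$, then invoke the known rigidity result (the paper cites \cite[Theorem 1.5]{feizmohammadi2021fractional})---but your route to \eqref{same heat kernel} is genuinely different and more direct. The paper tests the equality of the maps against the iterated sources $(\partial_t-\Delta_g)^m f$ supported in $\omega_1\times(-T,T)$, commutes $(\partial_t-\Delta_{g_j})^m$ through $e^{-\tau \mathcal{H}_j}$ and converts it into $(-\partial_\tau)^m$ via Lemma \ref{Lemma: double heat equation}, integrates by parts in $\tau$ (using the Gaussian bound \eqref{Gaussian upper bound} and the decay \eqref{expoenential decay} to kill endpoint contributions) to obtain vanishing of all $\tau$-moments of $(e^{-\tau\mathcal{H}_1}-e^{-\tau\mathcal{H}_2})f$, deduces $(e^{-\tau\mathcal{H}_1}-e^{-\tau\mathcal{H}_2})f=0$ on a second, disjoint set $\omega_2\times\R$ by a holomorphic Fourier-transform argument, and finally integrates in $t$ and uses unique continuation for the heat equation to pass from $\omega_2$ to $\mathcal{O}$. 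You instead exploit the Volterra (time-convolution) structure of $\mathcal{H}^{-s}$: from the explicit kernel $\Gamma(s)^{-1}(t-\sigma)_+^{s-1}\,e^{-(t-\sigma)\mathcal{L}}(x,y)$ (this is exactly the paper's Lemma \ref{Lemma: inverse Hs}), varying the time profile of the source deconvolves in $t$ and gives $e^{-\tau\mathcal{L}_1}(x,y)=e^{-\tau\mathcal{L}_2}(x,y)$ on $\mathcal{O}\times\mathcal{O}$ for $\tau\in(0,2T)$, and analyticity of the heat semigroup in $\tau$ extends this to all $\tau>0$. This buys a shorter argument that dispenses with the moment/Fourier machinery, the two disjoint observation sets and the parabolic unique continuation step; what it requires in exchange is (i) writing out the approximate-identity/deconvolution step carefully (legitimate, since the $y$-integrated kernel difference is bounded and continuous in $\tau>0$, and the equation is causal so only $\tau\in(0,2T)$ enters), and (ii) the standard holomorphy of $\tau\mapsto e^{-\tau\mathcal{L}_j}(x,y)$ on $\{\operatorname{Re}\tau>0\}$ for the continuation in $\tau$. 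For the final geometric step it suffices to quote \cite[Theorem 1.5]{feizmohammadi2021fractional} as the paper does; your sketch of extracting eigenvalues and spectral projections on $\mathcal{O}\times\mathcal{O}$ is a description of how such results are proved rather than a needed ingredient, though your observation that restrictions of eigenfunctions to $\mathcal{O}$ stay linearly independent (by elliptic unique continuation) is the right remark about the multiplicity issue.
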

	
	The preceding theorem was demonstrated in \cite{feizmohammadi2021fractional} for the elliptic case. Since $\LC \p_t -\Delta_g \RC^s$ is a space-like nonlocal operator (as $g=g(x)$), we could expect a similar result for this nonlocal parabolic operator.\\

	\noindent\textbf{Strategy.}	To prove Theorem \ref{thm: main}, our strategy is to demonstrate that the conditions \eqref{same local information} and \eqref{same local source to solution} can be used to deduce the identity  
	\begin{equation}\label{same heat kernel}
		e^{-\tau \mathcal{L}_1}(x,z)=e^{-\tau \mathcal{L}_2}(x,z), \text{ for }x,z \in \mathcal{O} \text{ and }\tau>0,
	\end{equation}
	where $\mathcal{L}_j:=-\Delta_{g_j}$, for $j=1,2$. If the condition \eqref{same heat kernel} holds, then one can apply \cite[Theorem 1.5]{feizmohammadi2021fractional} to conclude Theorem \ref{thm: main} as we wish. For this purpose, we need to analyze the function $e^{-\tau \mathcal{H}} f$ for any $f\in C^\infty_c(\mathcal{O}\times (-T,T))$ carefully (see Sections \ref{sec: prel} and \ref{sec: inverse problem}).\\

	\noindent\textbf{Organization of the article.}	The paper is organized as follows. In Section \ref{sec: prel}, we provide a rigorous definition of the nonlocal parabolic operator via the spectrum theory and related function spaces will be introduced. Furthermore, we demonstrate that the future data $u|_{\{t\geq T\}}$ of the solution $u$ to \eqref{equ: frac para} will not affect the solution $u|_{\{-T\leq t\leq T \}}$. In Section \ref{sec: well-posed}, we prove the well-posedness of \eqref{equ: frac para}, which also shows that the operator $\mathcal{H}^s$ is a space-like nonlocal operator. Finally, the proof of Theorem \ref{thm: main} will be given in Section \ref{sec: inverse problem}, which can be derived by showing the conditions \eqref{same local information} and \eqref{same local source to solution} ensure the identity \eqref{same heat kernel}.

	\section{The nonlocal parabolic operator on closed Riemannian manifolds}\label{sec: prel}

	Given $a\in \R$, let us denote 
	\begin{equation}
		\mathbf{H}^a(M\times \R):= \left\{ u\in L^2(M\times \R): \, \norm{u}_{\mathbf{H}^{a}(M\times \R)}<\infty \right\},
	\end{equation}
	where 
	\[
	\norm{u}_{\mathbf{H}^{a}(M\times \R)}= \bigg( \int_{\R} \sum_{k=0}^\infty  \LC 1 +\abs{\mathsf{i}\rho +\lambda_k}^2 \RC^{a/2} \left| \widehat{u_k}(\rho)\right|^2 \, d\rho \bigg)^{1/2}.
	\]
	Here 
	\begin{equation}
		\abs{\mathsf{i}\rho + \lambda_k}^2 =\rho^2 + \lambda_k^2 , \text{ for }k\in \N\cup \{0\}.
	\end{equation}
	It is obvious that $\mathbf{H}^s(M\times \R)\subset L^2(M\times \R)$.
	Moreover, for any open set $O\subset M\times \R$, let us denote 
	\begin{equation}
		\begin{split}
			\mathbf{H}^a (O)&:=\left\{ u|_{O} : \, u\in \mathbf{H}^s(M\times \R)\right\},\\
			\widetilde{\mathbf{H}}^a (O) & :=\text{closure of }C^\infty_c(O) \text{ in }\mathbf{H}^a(M\times \R).
		\end{split}
	\end{equation}
	By the duality, there holds that 
	\begin{equation}
		\begin{split}
			(\mathbf{H}^a(O))^{\ast}=\widetilde{\mathbf{H}}^{-a}(O) \quad \text{and}\quad (\widetilde{\mathbf{H}}^a(O))^\ast = \mathbf{H}^{-a}(O).
		\end{split}
	\end{equation}
	We also write 
	\begin{equation}
		\mathbf{H}^s_F =\left\{ u\in \mathbf{H}^s(M\times \R): \, \supp(u)\subset F\right\},
	\end{equation}
	for any closed set $F\subset M\times \R$.
	
	Let us first review several useful properties for the nonlocal operator $\mathcal{H}^s$. Given $s\in (0,1)$, let us define the domain of $\mathcal{H}^s$ by
	\begin{equation}
		\begin{split}
			\mathrm{Dom}(\mathcal{H}^s)= \mathbf{H}^{2s}(M\times \R)
		\end{split}
	\end{equation}
	where 
	Here $\LC \lambda_k\RC_{k\geq 0}$ is the eigenvalue of $-\Delta_g$ in $M$ introduced in the previous section. We denote $\mathbf{H}^{-s}(M\times \R)$ as the dual space of $\mathbf{H}^s(M\times \R)$, and it is not hard to check that 
	\begin{equation}\label{mapping property}
		\begin{split}
			\mathcal{H}^s: \mathbf{H}^s (M\times \R) & \to \mathbf{H}^{-s}(M\times \R), \\
			\mathcal{H}^s: \mathbf{H}^{s/2} (M\times \R)&\to L^2(M\times \R).
		\end{split}
	\end{equation}  
	The first relation can be seen by using the classical pairing argument. More concretely, one has that 
	\begin{equation}\label{pairing}
		\LC \mathcal{H}^s u , v \RC_{L^2(M\times \R)}= \int_{\R} \sum_{k=0}^\infty \left( \mathsf{i}\rho +\lambda_k\right)^{s} \widehat{u_k}(\rho) \overline{\widehat{v_k}(\rho)}\, d\rho ,
	\end{equation}
	for any $u,v \in C^\infty(M\times \R)$. Then a density argument ($C^\infty(M\times \R)$ is dense in $\mathbf{H}^s(M\times \R)$ with respect to the norm $\norm{\cdot}_{\mathbf{H}^s}$) yields that \eqref{pairing} holds, so that the mapping property \eqref{mapping property} is valid in the following form 
	\begin{equation}
		\begin{split}
			\left\langle \mathcal{H}^s u , v \right\rangle_{\mathbf{H}^{-s}\times \mathbf{H}^{s}} = \int_{\R} \sum_{k=0}^\infty \left( \mathsf{i}\rho +\lambda_k\right)^{s} \widehat{u_k}(\rho) \overline{\widehat{v_k}(\rho)}\, d\rho  
		\end{split}
	\end{equation}
	and we use $\norm{u}_{\mathbf{H}^s}=\norm{u}_{\mathbf{H}^s(M\times \R)}$ for the simplicity. This shows the first property in \eqref{mapping property}. For the second property, it is easy to see by using the definition of $\mathcal{H}^s$. More generally, one can also derive that $\mathcal{H}^b$ satisfies 
	\begin{equation}
		\mathcal{H}^b : \mathbf{H}^a (M\times \R) \to \mathbf{H}^{a-2b}(M\times \R),
	\end{equation} 
	for any $a\in \R$ and $b\geq 0$.

	On the other hand, we have the following integration-by-parts formula that 
	\begin{equation}\label{inte-by-part}
		\LC \mathcal{H}^s u , v \RC_{\mathbf{H}^{-s}\times \mathbf{H}^{s}} =  \big( \mathcal{H}^{s/2} u , \mathcal{H}^{s/2}_\ast v \big)_{L^2(M\times \R)},
	\end{equation}
	where $\mathcal{H}^{s}_\ast:= \LC -\p_t -\Delta_g \RC^s$ denotes the adjoint operator of $\mathcal{H}^s$, which is also a time-reversal operator of $\mathcal{H}^s$. Similar to the definition of $\mathcal{H}^s$, one can define $\mathcal{H}^s_\ast$ via 
	\begin{equation}
		\left\langle \mathcal{H}^s_\ast v , u \right\rangle_{\mathbf{H}^{-s}\times \mathbf{H}^s}  = \int_{\R}\sum_{k=0}^\infty \LC -\mathsf{i}\rho +\lambda_k\RC^s \widehat{v_k}(\rho) \overline{\widehat{u_k}(\rho)}\, d\rho,
	\end{equation}
	where $\widehat{u_k}(\rho)$ and $\widehat{v_k}(\rho)$ are given by \eqref{u_k and its Fourier transf}.

	Recall that $\LC \phi_k\RC_{k\geq 0}$ is an orthonormal basis in $L^2(M)$, and we can write the semigroup $\left\{ e^{-\tau \mathcal{L}}\right\}_{\tau \geq 0}$ generated by $\mathcal{L}:=-\Delta_g$ on $M$ as 
	\begin{equation}\label{heat generator}
		\begin{split}
			e^{-\tau \mathcal{L}} \varphi(x) =\sum_{k=0}^\infty e^{-\tau \lambda_k}  \LC \varphi, \phi_k \RC_{L^2(M)}\phi_k=\int_M e^{-\tau \mathcal{L}} (x,z)\varphi(z)\, dz,
		\end{split}
	\end{equation}
	for any $\varphi\in L^2(M)$ and for $k\geq 0$. Here $e^{-\tau \mathcal{L}}(x,z)$ is the heat kernel of $\p_t -\Delta_g=\p_t +\mathcal{L}$ on $M$, which is symmetric and nonnegative that
	\begin{equation}
		e^{-\tau \mathcal{L}}(x,z)=e^{-\tau \mathcal{L}} (z,x)\geq 0, \text{ for all }x,z\in M.
	\end{equation}
	Moreover, $e^{-\tau \mathcal{L}}(x,z)$ possesses the Gaussian upper bound
	\begin{equation}\label{Gaussian upper bound}
		e^{-\tau \mathcal{L}}(x,z) \leq \frac{C}{\tau^{n/2}}e^{-\frac{\abs{x-z}^2}{c\tau}}, \text{ for all }x, z\in M \text{ and }\tau >0,
	\end{equation}
	where $c,C>0$ are some positive constants (cf. \cite[Theorem 4.7, page 17]{SY_lecture}). 
	On the other hand, via the condition \eqref{ortho eigen}, observe that 
	\begin{equation}\label{unit heat kernel condition}
		e^{-\tau \mathcal{L}}1 (x)=1, \text{ for all }x\in M \text{ and }\tau >0,
	\end{equation}
	whenever $(M,g)$ is a closed Riemannian manifold.

	Since $\p_t$ and $-\Delta_g$ are commutable, we have $e^{-\tau \mathcal{H}}= e^{-\tau \mathcal{L}} \circ e^{-\tau \p_t}$ and 
	\begin{equation}\label{heat evolute semigroup}
		\begin{split}
			e^{-\tau \mathcal{H}}u(x,t)=e^{-\tau \mathcal{L}}\LC e^{-\tau \p_t} u \RC =e^{-\tau \mathcal{L}}(u(\cdot, t-\tau))(x)=\int_{M}e^{-\tau \mathcal{L}} (x,z)u(z,t-\tau)\, dz
		\end{split}
	\end{equation}
	in the sense that 
	\begin{equation}
		\begin{split}
			\LC  e^{-\tau \mathcal{H}}u,v \RC_{L^2(M\times \R)} &=\int_{\R}\sum_{k=0}^\infty e^{-\tau (\mathsf{i}\rho +\lambda_k )} \widehat{u_k}(\rho)\overline{\widehat{v_k}(\rho)}\, d\rho \\
			&= \int_{\R}\sum_{k=0}^\infty e^{-\tau \lambda_k}u_k (t-\tau) v_k(t)\, dt \\
			&= \int_{\R}\int_M \int_M e^{-\tau \mathcal{L}} (x,z)u(z,t-\tau) v(x,t)\, dzdxdt.
		\end{split}
	\end{equation}
	Recalling that$(M,g)$ is a closed connected smooth Riemannian manifold, by using the formula \eqref{pairing}, it is not hard to see 
	\begin{equation}\label{expoenential decay}
		\left\| e^{-\tau \mathcal{H}} u \right\|_{L^2(M\times \R)} \leq e^{-c_0 \tau } \norm{u}_{L^2(M\times \R)}, \quad  \text{for }\tau \geq 0,
	\end{equation}
	for some constant $c_0>0$. 
	
	With the above expressions and formulae, we can show an alternative formulation used to characterize $\mathcal{H}^s$ by the heat semigroup method.
	
	\begin{lemma}\label{Lemma: Hs semigroup}
		Given $s\in (0,1)$, for any $u\in \mathbf{H}^s(M\times \R)$, there holds that 
		\begin{equation}\label{heat extension}
			\mathcal{H}^su=\frac{1}{\Gamma(-s)} \int_0^\infty \LC e^{-\tau \mathcal{H}}u-u\RC \frac{d\tau}{\tau^{1+s}}
		\end{equation}
		in the sense that 
		\begin{equation}
			\left\langle \mathcal{H}^su, v \right\rangle_{\mathbf{H}^{-s}\times \mathbf{H}^{s}} =\frac{1}{\Gamma(-s)} \int_0^\infty\big( \LC e^{-\tau \mathcal{H}}u, v \RC_{L^2(M\times \R)} - \LC u,v \RC_{L^2(M\times \R)}\big) \frac{d\tau}{\tau^{1+s}},
		\end{equation}
		for any $v\in \mathbf{H}^s(M\times \R)$.
	\end{lemma}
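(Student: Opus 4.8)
The plan is to reduce \eqref{heat extension} to a scalar Balakrishnan-type identity in the spectral variables $(k,\rho)$, and then to pass back to the operator statement via Plancherel's theorem and Fubini's theorem. Set $\mu_k(\rho):=\mathsf{i}\rho+\lambda_k$, so that $\mathrm{Re}\,\mu_k(\rho)=\lambda_k\geq0$ and, by the definition of $\mathcal{H}^s$, $\left\langle\mathcal{H}^su,v\right\rangle_{\mathbf{H}^{-s}\times\mathbf{H}^{s}}=\int_\R\sum_{k=0}^\infty\mu_k(\rho)^s\,\widehat{u_k}(\rho)\overline{\widehat{v_k}(\rho)}\,d\rho$, where $\mu\mapsto\mu^s$ is the principal branch (holomorphic on $\C\setminus(-\infty,0]$, hence on the punctured closed right half-plane). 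From \eqref{heat evolute semigroup} and Plancherel one has $\LC e^{-\tau\mathcal{H}}u,v\RC_{L^2(M\times\R)}-\LC u,v\RC_{L^2(M\times\R)}=\int_\R\sum_{k=0}^\infty\LC e^{-\tau\mu_k(\rho)}-1\RC\widehat{u_k}(\rho)\overline{\widehat{v_k}(\rho)}\,d\rho$. Therefore it suffices to establish the pointwise identity
\[
\mu^s=\frac{1}{\Gamma(-s)}\int_0^\infty\LC e^{-\tau\mu}-1\RC\frac{d\tau}{\tau^{1+s}},\qquad\mathrm{Re}\,\mu\geq0,\ \mu\neq0,
\]
and to justify exchanging $\int_0^\infty(\cdot)\,\frac{d\tau}{\tau^{1+s}}$ with $\int_\R\sum_{k\geq0}(\cdot)\,d\rho$.

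For the scalar identity I would argue by analytic continuation. When $\mu>0$ is real, the substitution $r=\tau\mu$ together with the representation $\Gamma(-s)=\int_0^\infty(e^{-r}-1)r^{-1-s}\,dr$ — the analytic continuation of the Euler integral for $\Gamma$ to the strip $-1<\mathrm{Re}\,z<0$ — gives $\int_0^\infty(e^{-\tau\mu}-1)\tau^{-1-s}\,d\tau=\mu^s\Gamma(-s)$. Both sides of the displayed identity are holomorphic in the open right half-plane $\{\mathrm{Re}\,\mu>0\}$: for the integral one differentiates under the integral sign, using $\abs{e^{-\tau\mu}-1}\leq\abs{\mu}\tau$ near $\tau=0$ (which holds because $\abs{e^{-\tau\mu}}\leq1$ when $\mathrm{Re}\,\mu\geq0$) and $\abs{e^{-\tau\mu}-1}\leq2$ near $\tau=\infty$ to obtain a locally uniform dominating function. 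Dominated convergence shows both sides are also continuous up to the punctured imaginary axis, so agreement on $(0,\infty)$ forces agreement on $\{\mathrm{Re}\,\mu\geq0\}\setminus\{0\}$; in particular this covers $\mu_0(\rho)=\mathsf{i}\rho$ and all $\mu_k(\rho)$.

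For the interchange, the elementary bound $\abs{e^{-\tau\mu}-1}\leq C\min(1,\tau\abs{\mu})$ for $\mathrm{Re}\,\mu\geq0$ implies, after splitting the $\tau$-integral at $\tau=\abs{\mu}^{-1}$, that $\int_0^\infty\abs{e^{-\tau\mu}-1}\,\frac{d\tau}{\tau^{1+s}}\leq C_s\abs{\mu}^s$. Applying this with $\mu=\mu_k(\rho)$, using $\abs{\mu_k(\rho)}^s=(\rho^2+\lambda_k^2)^{s/2}\leq(1+\rho^2+\lambda_k^2)^{s/2}$, and splitting the weight $(1+\abs{\mu_k(\rho)}^2)^{s/2}$ symmetrically over the two factors, two applications of Cauchy–Schwarz (first in $k$, then in $\rho$) give
\[
\int_0^\infty\int_\R\sum_{k=0}^\infty\abs{e^{-\tau\mu_k(\rho)}-1}\,\abs{\widehat{u_k}(\rho)}\,\abs{\widehat{v_k}(\rho)}\,d\rho\,\frac{d\tau}{\tau^{1+s}}\leq C_s\norm{u}_{\mathbf{H}^s}\norm{v}_{\mathbf{H}^s}<\infty
\]
for $u,v\in\mathbf{H}^s(M\times\R)$. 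Hence Fubini's theorem permits moving the $\frac{d\tau}{\tau^{1+s}}$-integral inside; substituting the scalar identity and reassembling the $\rho$-integral and the $k$-sum into $\left\langle\mathcal{H}^su,v\right\rangle_{\mathbf{H}^{-s}\times\mathbf{H}^{s}}$ yields \eqref{heat extension}. I expect the only genuinely delicate point to be the passage of the scalar identity to the imaginary axis — i.e.\ controlling the oscillatory factor $e^{-\mathsf{i}\tau\rho}$ in the $k=0$ term, and more generally for large $\abs{\rho}$ — which is why I would route it through holomorphy plus boundary continuity rather than attempt a direct evaluation; the remaining steps are routine bookkeeping with the two Cauchy–Schwarz estimates.
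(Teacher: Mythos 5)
Your proposal is correct and follows essentially the same route as the paper: the scalar identity $\LC \mathsf{i}\rho+\lambda_k\RC^s=\frac{1}{\Gamma(-s)}\int_0^\infty\big(e^{-\tau(\mathsf{i}\rho+\lambda_k)}-1\big)\frac{d\tau}{\tau^{1+s}}$ inserted into the spectral pairing \eqref{pairing}. The only difference is that the paper outsources this identity and its absolute convergence to a citation (\cite[Lemma 2.1]{BDLCS2021harnack}) and leaves the interchange of integrals implicit, whereas you prove the identity by analytic continuation and verify the Fubini hypothesis via the bound $\int_0^\infty\abs{e^{-\tau\mu}-1}\,\tau^{-1-s}d\tau\leq C_s\abs{\mu}^s$ and Cauchy--Schwarz — details that are consistent with, and indeed fill in, the paper's argument.
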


	\begin{proof}
		By using the formula 
		\begin{equation}
			\LC \mathsf{i}\rho + \lambda_k\RC^s =\frac{1}{\Gamma(-s)}\int_0^\infty \big( e^{-\tau (\mathsf{i}\rho +\lambda_k)}-1 \big) \frac{d\tau}{\tau^{1+s}},
		\end{equation}
		where the integral converges absolutely (see the proof of \cite[Lemma 2.1]{BDLCS2021harnack}). Combining with the formula \eqref{pairing}, there holds that 
		\begin{equation}
			\begin{split}
				\left\langle \mathcal{H}^s u,v \right\rangle_{\mathbf{H}^{-s}\times \mathbf{H}^s}= \int_{\R} \sum_{k=0}^\infty \bigg[ \frac{1}{\Gamma(-s)}\int_0^\infty \big(  e^{-\tau (\mathsf{i}\rho +\lambda_k)} -1 \big) \frac{d\tau}{\tau^{1+s}} \bigg] \widehat{u_k}(\rho)\overline{\widehat{v_k}(\rho)}\, d\rho ,
			\end{split}
		\end{equation}
		which proves the assertion.
	\end{proof}

	Let us refer readers to the \emph{Balakrishnan formula} (see \cite[equation (9.63), page 285]{Samko02}), which can be used to define the general fractional order of operators.
	Last but not least, we can derive an integral representation formula of $\mathcal{H}^s$.
	
	\begin{proposition}[Representation formula]
		We have the integral formula of $\mathcal{H}^s$ by 
		\begin{equation}\label{representation formula of H^s}
			\begin{split}
				\mathcal{H}^s u(x,t)=\int_0^\infty \int_{M}  \mathcal{K}_s (x,z;\tau)\LC u(z,t-\tau) -u(x,t)\RC dz d\tau 
			\end{split}
		\end{equation}
		which belong to $\mathbf{H}^{-s}(M\times \R)$, for any $u\in \mathbf{H}^s(M\times \R)$, where 
		\begin{equation}\label{kernel nonlocal}
			\mathcal{K}_s(x,z;\tau):= \frac{1}{\Gamma(-s)}\frac{e^{-\tau \mathcal{L}}(x,z)}{\tau^{1+s}}, \text{ for }x,z\in M \text{ and }\tau >0.
		\end{equation}
	\end{proposition}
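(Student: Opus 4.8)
The plan is to read off \eqref{representation formula of H^s} directly from the semigroup formula of Lemma~\ref{Lemma: Hs semigroup}, after writing the heat semigroup $e^{-\tau\mathcal H}$ as an integral operator against the heat kernel and absorbing the kernel into $\mathcal K_s$. Starting from
\[
\left\langle \mathcal H^s u,v\right\rangle_{\mathbf H^{-s}\times\mathbf H^s}=\frac1{\Gamma(-s)}\int_0^\infty\Big((e^{-\tau\mathcal H}u,v)_{L^2(M\times\R)}-(u,v)_{L^2(M\times\R)}\Big)\frac{d\tau}{\tau^{1+s}},
\]
valid for $u,v\in\mathbf H^s(M\times\R)$, I would rewrite the bracket for each fixed $\tau>0$. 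By \eqref{heat evolute semigroup}, $e^{-\tau\mathcal H}u(x,t)=\int_M e^{-\tau\mathcal L}(x,z)\,u(z,t-\tau)\,dz$, while the conservation property \eqref{unit heat kernel condition}, namely $\int_M e^{-\tau\mathcal L}(x,z)\,dz=1$, lets one write $u(x,t)=\int_M e^{-\tau\mathcal L}(x,z)\,u(x,t)\,dz$. Subtracting gives
\[
g_\tau:=e^{-\tau\mathcal H}u-u,\qquad g_\tau(x,t)=\int_M e^{-\tau\mathcal L}(x,z)\big(u(z,t-\tau)-u(x,t)\big)\,dz\ \in\ L^2(M\times\R),
\]
so that, recognizing $\mathcal K_s(x,z;\tau)=\Gamma(-s)^{-1}\tau^{-1-s}e^{-\tau\mathcal L}(x,z)$ from \eqref{kernel nonlocal}, the claimed formula is literally the Balakrishnan-type identity $\mathcal H^s u=\Gamma(-s)^{-1}\int_0^\infty\tau^{-1-s}g_\tau\,d\tau$ of Lemma~\ref{Lemma: Hs semigroup} with $e^{-\tau\mathcal H}$ written out: the inner $z$-integral is exactly $\int_M\mathcal K_s(x,z;\tau)(u(z,t-\tau)-u(x,t))\,dz$, and the outer $\tau$-integral is the one in the lemma. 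In particular no interchange of integrals is involved at this stage. Membership of the right-hand side in $\mathbf H^{-s}(M\times\R)$ is then nothing new: $\mathcal H^s u\in\mathbf H^{-s}(M\times\R)$ is already part of the mapping property \eqref{mapping property}.

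What remains is to specify the sense in which $\int_0^\infty\tau^{-1-s}g_\tau\,d\tau$ converges, and this is the only substantial step. I would quantify the small-$\tau$ decay of $g_\tau$ in the spectral picture: since $\widehat{(g_\tau)_k}(\rho)=\big(e^{-\tau(\mathsf i\rho+\lambda_k)}-1\big)\widehat{u_k}(\rho)$, the elementary bound $|e^{-\tau\mu}-1|\le 2\min\{\tau|\mu|,1\}\le 2(\tau|\mu|)^{\beta}$ for $\mathrm{Re}\,\mu\ge0$ and $0\le\beta\le2$, applied with $\beta=(s+s')/2$, yields for every $s'\in(s,1)$ an estimate of the form $\norm{g_\tau}_{\mathbf H^{-s'}(M\times\R)}\le C\,\tau^{(s+s')/2}\norm{u}_{\mathbf H^s(M\times\R)}$, whose exponent satisfies $(s+s')/2>s$; for $\tau\to\infty$ the crude inequality $\norm{g_\tau}_{\mathbf H^{-s'}(M\times\R)}\le\norm{g_\tau}_{L^2(M\times\R)}\le 2\norm{u}_{L^2(M\times\R)}$ (or the exponential decay \eqref{expoenential decay}) is enough. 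Hence $\tau\mapsto\tau^{-1-s}g_\tau$ is Bochner integrable on $(0,\infty)$ with values in $\mathbf H^{-s'}(M\times\R)$ for every $s'\in(s,1)$, so $\int_0^\infty\tau^{-1-s}g_\tau\,d\tau$ is a well-defined element of that space; pairing it with an arbitrary $v\in\mathbf H^{s'}(M\times\R)$ and invoking the displayed identity from Lemma~\ref{Lemma: Hs semigroup} identifies it with $\Gamma(-s)\,\mathcal H^su$, which by \eqref{mapping property} in fact lies in the smaller space $\mathbf H^{-s}(M\times\R)$. This establishes \eqref{representation formula of H^s}, the right-hand side being understood (as in the preceding statements of this section) as this Bochner integral in $\mathbf H^{-s'}$ with $s'>s$ arbitrary, equivalently in the weak $\mathbf H^{-s}$–$\mathbf H^{s}$ pairing sense.

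The main obstacle is precisely this $\tau\to0^+$ analysis. A naive alternative — bounding the iterated integral in \eqref{representation formula of H^s} by absolute values and interchanging the $\tau$-integration with the $(x,t)$-integration after testing against $v$ — does not work once $s\ge\tfrac12$: estimating $u(z,t-\tau)-u(x,t)$ pointwise and integrating the Gaussian kernel \eqref{Gaussian upper bound} in $z$ only produces a $\tau^{1/2}$-gain, and $\tau^{-1-s+1/2}$ is non-integrable near $0$ for $s\ge\tfrac12$. The spectral estimate above sidesteps this by exploiting the cancellation built into $e^{-\tau\mathcal H}u-u$ rather than pointwise bounds, at the harmless cost of reading the convergence in $\mathbf H^{-s'}$ for $s'>s$ rather than in $\mathbf H^{-s}$ (the $\mathbf H^{-s}$-convergence is in fact only logarithmically borderline); the value of the integral, of course, still belongs to $\mathbf H^{-s}(M\times\R)$.
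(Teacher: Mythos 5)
Your proof is correct and follows essentially the paper's own argument: both read \eqref{representation formula of H^s} off from Lemma \ref{Lemma: Hs semigroup} by writing $e^{-\tau\mathcal{H}}u$ through the heat kernel as in \eqref{heat evolute semigroup} and using the unit-mass property \eqref{unit heat kernel condition} to absorb the $u(x,t)$ term into the same $z$-integral (the paper phrases this via Fubini), while your Bochner-integrability estimate in $\mathbf{H}^{-s'}(M\times\R)$, $s'>s$, merely makes explicit a convergence point the paper leaves to the absolute convergence cited in the proof of Lemma \ref{Lemma: Hs semigroup}. One cosmetic slip: the bound $\min\{x,1\}\le x^{\beta}$ requires $0\le\beta\le 1$ rather than $0\le\beta\le 2$, which is harmless here since you apply it with $\beta=(s+s')/2<1$.
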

	
	\begin{proof}
		Note that from the formulae \eqref{heat generator} and \eqref{unit heat kernel condition}, then there holds that 
		\begin{equation}\label{heat kernel =1}
			1 =e^{-\tau \mathcal{L}}1(x)= \int_M e^{-\tau \mathcal{L}}(x,z) \, dz, \text{ for all }x\in M.
		\end{equation}
		Now, by using \eqref{heat evolute semigroup} and \eqref{heat extension}, we have 
		\begin{equation}
			\begin{split}
				&\quad \, \mathcal{H}^su(x,t)\\
				&= \frac{1}{\Gamma(-s)}\int_0^\infty \bigg( \int_M e^{-\tau \mathcal{L}} (x,z)u(z,t-\tau)\, dz -u(x,t) \bigg) \, \frac{d\tau}{\tau^{1+s}} \\
				&= \frac{1}{\Gamma(-s)}\int_0^\infty \bigg( \int_M e^{-\tau \mathcal{L}}(x,z)u(z,t-\tau)\, dz -\underbrace{\bigg( \int_M e^{-\tau \mathcal{L}}(x,z)\, dz \bigg)}_{=1\text{ by \eqref{heat kernel =1}}} u(x,t) \bigg) \, \frac{d\tau}{\tau^{1+s}}  \\
				&=\frac{1}{\Gamma(-s)}\int_0^\infty  \int_M e^{-\tau \mathcal{L}} (x,z) (u(z,t-\tau)-u(x,t)) \, dz \frac{d\tau}{\tau^{1+s}},
			\end{split}
		\end{equation}
		where we use the Fubini theorem in the last identity. Finally, using \eqref{kernel nonlocal}, this proves the assertion.
	\end{proof}

	\begin{remark}
		If $u$ is time-independent, i.e., $u(x,t)\equiv u(x)$, it is easy to see that $\mathcal{H}^su(x)=(-\Delta_g)^su(x)$ by using the formula \eqref{representation formula of H^s}. However, from the initial data of the problem \eqref{equ: frac para}, one can not simply take the solution $u(x,t)$ as a time-independent function so that one can apply the existing results given in \cite{feizmohammadi2021fractional} to show Theorem \ref{thm: main}.
	\end{remark}

	Motivated by \cite{LLR2019calder}, we have the following lemma.
	
	\begin{lemma}\label{Lemma: auxiliary}
		Let $\chi_{A}(t)$ be the characteristic function of $A\subset \R$, for any $u\in \mathbf{H}^s(M\times \R)$, then there holds that 
		\begin{enumerate}[(i)]
			\item $\chi_{A}(t)u(x,t)\in \mathbf{H}^s(M\times \R)$, for any interval $A\subset \R$.
			
			\item $\mathcal{H}^s u(x,t)= \mathcal{H}^s\LC \chi_{(-\infty,T](t)}u(x,t)\RC$ for $(x,t)\in \Omega\times (-T,T)$.
		\end{enumerate}
	\end{lemma}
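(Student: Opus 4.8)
The plan is to prove the two assertions in sequence, with (i) essentially a mapping-property computation and (ii) a consequence of the backward-in-time structure of $\mathcal{H}^s$ built into the representation formula \eqref{representation formula of H^s}.

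For part (i), I would argue that multiplication by $\chi_A$ preserves the space $\mathbf{H}^s(M\times\R)$ for $0<s<1$. The cleanest route is to test against the equivalent norm coming from \eqref{heat extension}–type quadratic forms, or more directly to use that the $\mathbf{H}^s(M\times\R)$-norm is comparable to $\|u\|_{L^2}+\|\mathcal{H}^{s/2}u\|_{L^2}+\|\mathcal{H}^{s/2}_\ast u\|_{L^2}$ (since $|\mathsf{i}\rho+\lambda_k|^{s}$ controls, and is controlled by, $|\mathsf{i}\rho|^s+\lambda_k^s$ up to constants). On the Fourier side in $t$, multiplication by $\chi_A$ for an interval $A$ is a (uniformly) bounded operator on the homogeneous fractional Sobolev space $\dot H^{s/2}(\R_t)$ precisely when $s/2<1/2$, i.e. $s<1$, which is our hypothesis; this is the classical fact that $\chi_{(a,b)}$ is a Fourier multiplier / pointwise multiplier on $H^{\sigma}(\R)$ for $|\sigma|<1/2$. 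Applying this in the $t$ variable for each fixed $k$ (the spatial part $\lambda_k^{s/2}$ is untouched, since $\chi_A(t)$ acts only in time and commutes with the spatial Fourier expansion in $\phi_k$) and summing in $k$ by Fubini/Tonelli gives $\chi_A u\in\mathbf{H}^s(M\times\R)$ with a norm bound uniform in $A$. I would record the one-variable multiplier estimate as a cited lemma (e.g. from Grisvard or Triebel) rather than reproving it.

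For part (ii), write $w:=\chi_{(-\infty,T]}(t)\,u(x,t)$, which lies in $\mathbf{H}^s(M\times\R)$ by part (i), so $\mathcal{H}^s w$ makes sense in $\mathbf{H}^{-s}$. Using the representation formula \eqref{representation formula of H^s}, for $(x,t)\in\mathcal{O}\times(-T,T)$ we have
\begin{equation}
	\mathcal{H}^s u(x,t)-\mathcal{H}^s w(x,t)=\int_0^\infty\int_M \mathcal{K}_s(x,z;\tau)\,\big(u(z,t-\tau)-w(z,t-\tau)\big)\,dz\,d\tau,
\end{equation}
since the diagonal terms $u(x,t)$ and $w(x,t)$ agree for $t<T$ and cancel. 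But $u(z,t-\tau)-w(z,t-\tau)=\big(1-\chi_{(-\infty,T]}(t-\tau)\big)u(z,t-\tau)=\chi_{(T,\infty)}(t-\tau)\,u(z,t-\tau)$, which vanishes unless $t-\tau>T$, i.e. $\tau<t-T$; for $t<T$ and $\tau>0$ this range is empty. Hence the integrand is identically zero and $\mathcal{H}^s u=\mathcal{H}^s w$ on $\mathcal{O}\times(-T,T)$. (I note the statement writes $\Omega$ where $\mathcal{O}$ is presumably meant; the argument is independent of which open set is used.) One should be slightly careful that the representation formula is an identity in $\mathbf{H}^{-s}$, so strictly the cancellation argument should be phrased via testing against $\varphi\in C_c^\infty(\mathcal{O}\times(-T,T))$ and using Fubini together with the Gaussian bound \eqref{Gaussian upper bound} and $1/\tau^{1+s}$ integrability near the relevant regions, but since the pointwise integrand already vanishes on the support of any such test function this is immediate.

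\textbf{Main obstacle.} The only genuinely delicate point is part (i): one must invoke the sharp threshold $s<1$ for $\chi_A$ to be a pointwise multiplier on $H^{s/2}$ in the time variable, and verify that tensoring with the (harmless) spatial factor and summing the resulting series in $k$ does not spoil the bound — this is routine by monotone convergence once the one-dimensional estimate is in hand, but it is where the hypothesis $0<s<1$ is actually used. Part (ii) is then a short and essentially algebraic consequence of the causal (backward-time) structure encoded in $\mathcal{K}_s$.
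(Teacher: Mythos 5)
Your proposal is correct and takes essentially the same route as the paper: part (i) is the time-multiplier property of $\chi_A$ at regularity $s/2<1/2$ (which the paper simply delegates to \cite[Section 3]{LLR2019calder}), and part (ii) is precisely the paper's causality argument via the representation formula \eqref{representation formula of H^s}, namely that for $t<T$ and $\tau>0$ both the shifted and diagonal terms of $u$ and $\chi_{(-\infty,T]}u$ coincide. You merely spell out the details (norm comparison, uniformity in $A$, distributional interpretation) that the paper leaves implicit, so there is no gap.
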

	
	\begin{proof}
		Since $\chi_{A}(t)$ is a space-independent function, the first statement follows by utilizing a similar approach given in \cite[Section 3]{LLR2019calder}. For the second statement, note that $ \chi_{(-\infty,T](t)}u\in \mathbf{H}^s(M\times \R)$ provided that $u\in \mathbf{H}^s(M\times \R)$. Then the identity holds by using the integral representation \eqref{representation formula of H^s}. More specifically, one may compute that 
		\begin{equation}
			\begin{split}
				&\quad \, \mathcal{H}^su(x,t)\\
				&=\int_0^\infty \int_{M}  \mathcal{K}_s (x,z;\tau)\LC u(z,t-\tau) -u(x,t)\RC dz d\tau \\
				&= \int_0^\infty \int_{M}  \mathcal{K}_s (x,z;\tau)\big(  \big( \chi_{(-\infty, T]}(t)u\big) (z,t-\tau) - \big( \chi_{(-\infty, T]}(t)u\big)  (x,t)\big) \, dz d\tau 
			\end{split}
		\end{equation}
		for $(x,t)\in M\times (-T,T)$, and $u\in \mathbf{H}^s(M\times \R)$, and the last identity holds since $\tau \in (0,\infty)$. This completes the proof.
	\end{proof}
	
	The above lemma states that the value $\left. \mathcal{H}^su \right|_{M\times (-T,T)}$ will not be affected by the information $u|_{\{t\geq T\}}$. In other words, the value $\mathcal{H}^su(x,t)$ depends only on the past, not the future. Similar properties as in Lemma \ref{Lemma: auxiliary} hold for the adjoint operator $\mathcal{H}^s_\ast$, and the value of $\left.\mathcal{H}^s_\ast u\right|_{M\times [-T,T]}$ will not be affected by the past time $\{t\leq -T\}$.

	\section{The well-posedness}\label{sec: well-posed}
	
	We next turn to study the well-posedness of \eqref{equ: frac para}.
	As explained in \cite{LLR2019calder}, the operator $\mathcal{H}^s$ is a space-time coupled operator, which is the space-like nonlocal operator.
	Thus, combining with the above relation \eqref{inte-by-part}, one can define the bilinear form associated with \eqref{equ: frac para} via 
	\begin{equation}
		B_g(u,v):= \big( \mathcal{H}^{s/2} u , \mathcal{H}^{s/2}_\ast v \big)_{L^2(M\times \R)}.
	\end{equation}
	then similar to the arguments as in \cite{LLR2019calder,BS2022calderon}, one can check that $B_g(\cdot, \cdot)$ is bounded and coercive in the space $\mathbf{H}^s(M\times \R)$. It is worth mentioning that this argument was first found in the work \cite{LLR2019calder}, which also demonstrates that $\mathcal{H}^s$ is a space-like nonlocal operator.

	\begin{lemma}[Well-posedness]
		Let $(M,g)$ be a closed connected smooth Riemannian manifold with $\dim M\geq 2$. Then given $f\in \mathbf{H}^{-s}(M\times \R)$ fulfilling \eqref{compatibility condition}, there exists a unique solution $u\in H^s(M\times \R)$ solving \eqref{equ: frac para}.	 
	\end{lemma}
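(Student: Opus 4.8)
The plan is to produce $u$ by a Lax--Milgram argument built on the bilinear form $B_g(u,v)=\big(\mathcal{H}^{s/2}u,\mathcal{H}^{s/2}_\ast v\big)_{L^2(M\times\R)}$ introduced above. First I would fix the variational setting. By the compatibility condition \eqref{compatibility condition} the source $f$ vanishes for $t\le-T$, and by Lemma \ref{Lemma: auxiliary} (together with its time-reversed analogue for $\mathcal{H}^s_\ast$) the operator $\mathcal{H}^s$ sees only the past while $\mathcal{H}^s_\ast$ sees only the future; accordingly I take as trial space the closed subspace of $\mathbf{H}^s(M\times\R)$ of functions vanishing for $t\le-T$, concretely $\widetilde{\mathbf{H}}^s(M\times(-T,T))$ in the finite-slab formulation (one may alternatively work on the causal half-line $\mathbf{H}^s_{M\times[-T,\infty)}$, with test space vanishing for $t\ge T$, and invoke the generalized inf--sup version of Lax--Milgram). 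Using \eqref{inte-by-part} and the Plancherel identity \eqref{pairing}, $u$ in the trial space is a weak solution of \eqref{equ: frac para} precisely when
\begin{equation}
B_g(u,v)=\langle f,v\rangle_{\mathbf{H}^{-s}\times\mathbf{H}^s}\qquad\text{for all admissible test }v,
\end{equation}
and the Lax--Milgram lemma (no symmetry of $B_g$ is needed) reduces the statement to boundedness of $B_g$ and of $v\mapsto\langle f,v\rangle$, together with a coercivity estimate.

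Boundedness is routine: from \eqref{pairing}, the identity $|(\mathsf{i}\rho+\lambda_k)^s|=(\rho^2+\lambda_k^2)^{s/2}\le(1+\rho^2+\lambda_k^2)^{s/2}$ and the Cauchy--Schwarz inequality one gets $|B_g(u,v)|\le\|u\|_{\mathbf{H}^s}\|v\|_{\mathbf{H}^s}$, while $v\mapsto\langle f,v\rangle$ is bounded since $f\in\mathbf{H}^{-s}(M\times\R)=(\mathbf{H}^s(M\times\R))^\ast$. The substance is coercivity, for which I would use the sectoriality of the symbol: for $\lambda_k\ge0$ the number $z=\lambda_k+\mathsf{i}\rho$ lies in the closed right half-plane, so with the principal branch $\operatorname{Re}(z^s)=|z|^s\cos(s\arg z)\ge\cos(\tfrac{\pi s}{2})\,|z|^s$, and $\cos(\tfrac{\pi s}{2})>0$ since $0<s<1$; hence
\begin{equation}
\operatorname{Re}B_g(u,u)\ \ge\ \cos\!\Big(\tfrac{\pi s}{2}\Big)\int_{\R}\sum_{k=0}^\infty(\rho^2+\lambda_k^2)^{s/2}\,|\widehat{u_k}(\rho)|^2\,d\rho .
\end{equation}
This already controls the homogeneous part of $\|u\|_{\mathbf{H}^s}^2$; to reach the full norm I would split off the modes $k\ge1$, where $(\rho^2+\lambda_k^2)^{s/2}\ge\lambda_1^s>0$ supplies a spectral gap (connectedness of $(M,g)$ gives $\lambda_1>0$), and handle the zeroth mode $k=0$, where the weight degenerates to $|\rho|^s$, by a fractional Poincar\'e inequality in time applied to $u_0(t)=(u(\cdot,t),1)_{L^2(M)}$ — which is legitimate because, on the finite slab, $u_0$ is supported in the bounded interval $(-T,T)$. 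The same coercivity estimate can be read off from the semigroup representation \eqref{heat extension} combined with the decay \eqref{expoenential decay}.

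I expect the $k=0$ mode to be the main obstacle: for a function merely supported on a half-line the bound $\|u_0\|_{L^2(\R)}\lesssim\||\p_t|^{s/2}u_0\|_{L^2(\R)}$ is false in general, so one genuinely needs the finite time horizon $(-T,T)$ of the data, together with the causality of Lemma \ref{Lemma: auxiliary}, to reduce to $u$ with compact time support, where the classical fractional Poincar\'e inequality applies. Once boundedness and coercivity (resp. the inf--sup condition) are established, Lax--Milgram yields a unique $u$ in the trial space, hence $u\in\mathbf{H}^s(M\times\R)$ with $u=0$ for $t\le-T$ and $\mathcal{H}^su=f$ in $M\times(-T,T)$, uniqueness being understood within this natural class; and the causality in Lemma \ref{Lemma: auxiliary} is precisely what guarantees that $u|_{M\times(-T,T)}$, and hence the source-to-solution map \eqref{local source-to-solution map}, is well defined independently of the data on $\{t\ge T\}$.
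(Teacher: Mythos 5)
Your proposal follows essentially the same route as the paper: a Lax--Milgram argument for the bilinear form $B_g(u,v)=\big(\mathcal{H}^{s/2}u,\mathcal{H}^{s/2}_\ast v\big)_{L^2(M\times\R)}$ on functions supported in the time slab, with boundedness from Cauchy--Schwarz in the spectral representation \eqref{pairing} and coercivity from $\operatorname{Re}\big((\mathsf{i}\rho+\lambda_k)^s\big)\ge\cos\big(\tfrac{\pi s}{2}\big)\,(\rho^2+\lambda_k^2)^{s/2}$. If anything, you are more careful than the paper on the zero mode $k=0$, where the weight degenerates to $|\rho|^s$: the paper asserts the bound $\ge c_s\norm{u_T}_{\mathbf{H}^s(M\times\R)}^2$ directly, whereas your fractional Poincar\'e argument exploiting the compact time support is precisely the justification that step needs.
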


	\begin{proof}
		Notice that even $\mathcal{H}^s$ is a fractional parabolic operator, we will use the Lax-Milgram theorem to prove the result. To simplify the notation, let us denote 
		$$
		u_T(x,t)=\chi_{[-T,T]}(t)u(x,t),
		$$ which belongs to $\mathbf{H}^s(M\times \R)$ as $u\in \mathbf{H}^s(M\times \R)$ (see Lemma \ref{Lemma: auxiliary}).
		On one hand, for the boundedness, by using the H\"older inequality, we have 
		\begin{equation}\label{boundedness}
			\begin{split}
				&\quad \,  \left| B_g (u_T,v)\right| \\
				& = \bigg| \int_{\R}\sum_{k=0}^\infty \left( \mathsf{i}\rho + \lambda_k \right) ^s \widehat{u_{T,k}}(\rho)\overline{ \widehat{v_k}(\rho)}\, d\rho \bigg| \\
				&\leq \bigg( \int_{\R}\sum_{k=0}^\infty  \left| \mathsf{i}\rho + \lambda_k \right| ^s \left| \widehat{u_{T,k}}(\rho) \right|^2 \, d\rho \bigg)^{1/2} \bigg( \int_{\R}\sum_{k=0}^\infty  \left| \mathsf{i}\rho + \lambda_k \right| ^s \left| \widehat{v_k}(\rho) \right|^2 \, d\rho \bigg)^{1/2}\\
				&\leq \norm{u_T}_{\mathbf{H}^s(M\times \R)}  \norm{v}_{\mathbf{H}^s(M\times \R)},
			\end{split}
		\end{equation}
		for any $v\in \mathbf{H}^s_{M\times [-T,T]}$, where $u_{T,k}$ and $\widehat{u_{T,k}}$ are the functions given by \eqref{u_k and its Fourier transf} when the function $u$ is replaced by $u_T$.

		On the other hand, for the coercive, we can see that 
		\begin{equation}\label{coercive 1}
			\begin{split}
				B_g(u_T,u_T)  &=  \int_{\R}\sum_{k=0}^\infty \left( \mathsf{i}\rho +\lambda_k \right)^s  \big| \widehat{u_{T,k}}(\rho)\big|^2 \, d\rho  \\
				&=  \int_{\R} \bigg[ \sum_{k=0}^\infty \LC \rho^2 + \lambda_k^2\RC^{s/2}\LC \cos (s\theta_k) + \mathsf{i}\sin(s\theta_k)\RC  \bigg] \big| \widehat{u_{T,k}}(\rho)\big|^2\, d\rho  ,
			\end{split}
		\end{equation}
		where $\theta_k =\tan^{-1}\big(\frac{\rho}{\lambda_k}\big)$ for $k\in \N$. Note that the sine function is odd, and $\big| \widehat{u_{T,k}}(\rho)\big|^2 =\big| \widehat{u_{T,k}}(-\rho)\big|^2 $ for any $\rho \in \R$, then there holds 
		$$
		\int_{\R} \sum_{k=0}^\infty \LC \rho^2 + \lambda_k^2 \RC^{s/2}\sin (s\theta_k )  \big| \widehat{u_{T,k}}(\rho)\big|^2\, d\rho =0.
		$$
		Meanwhile, since $\LC \phi_k \RC_{k\geq 0}$ is an orthonormal basis in $L^2(M)$, we can write $u_T(x,t)=\sum_{k=0}^\infty u_{T,k}(t)\phi_k(x)$ such that 
		$$
		\norm{u_T(\cdot, t)}^2_{L^2(M)}= \sum_{k=0}^\infty \big| u_{T,k}(t) \big|^2, \text{ for } t\in \R,
		$$
		where we used $\norm{\phi_k}_{L^2(M)}=1$, for all $k\geq 0$.
		Inserting the above identities into \eqref{coercive 1}, we can obtain 
		\begin{equation}\label{coercive 2}
			\begin{split}
				\left| B_g (u_T,u_T)\right| & =\int_{\R} \sum_{k=0}^\infty \LC \rho^2 +\lambda_k^2 \RC^{s/2} \cos (s\theta_k) \big| \widehat{u_{T,k}}(\rho) \big|^2 \, d\rho \\
				&\geq  \int_{\R} \sum_{k=0}^\infty\LC \rho ^2 +\lambda_k^2 \RC^{s/2} \cos (s\pi /2) \big| \widehat{u_{T,k}}(\rho)\big|^2 \, d\rho\\
				&\geq c_s \norm{u_T}_{\mathbf{H}^s(M\times \R)}^2,
			\end{split}
		\end{equation}
		for some constant $c_s>0$ depending only on $s\in (0,1)$.
		
		Therefore, for any $f\in C^\infty_c (M\times (-T,T))$, by using \eqref{boundedness} and \eqref{coercive 2}, the Lax-Milgram theorem yields that 
		\[
		B_g (u_T,v)=\LC f, v \RC_{L^2(M\times \R)},
		\]
		admits a unique solution $u_T \in \mathbf{H}^s(M\times \R)$ to the initial value problem \eqref{equ: frac para} with $u_T=0$ in $M\times \{t\leq -T\}$. This proves the assertion.	
	\end{proof}

	\begin{remark}
		With the above well-posedness at hand, the local source-to-solution map \eqref{local source-to-solution map} is automatically well-defined. Moreover, since the solution $u=0$ in $\{t\leq -T\}$ and $u_T=u$ in $M\times [-T,T]$, then it is possible to extend the measurement in the whole spacetime cylinder $\mathcal{O}\times \R$.
		
	\end{remark}

	\section{The Calder\'on problem}\label{sec: inverse problem}

	\subsection{Auxiliary lemmas}

	It is different to the nonlocal elliptic case as in \cite{feizmohammadi2021fractional}, the function $e^{-\tau \mathcal{H}}u$ is not a solution to any (local) ''parabolic equations", but we can still find a useful equation for the function $e^{-\tau \mathcal{H}}u$. The next lemma is similar to \cite[Lemma 3.1]{LLU2022para} in the Euclidean case.
	
	\begin{lemma}\label{Lemma: double heat equation}
		Let $u\in \mathbf{H}^s(M\times \R)$, and consider the function 
		\begin{equation}\label{tilde u}
			\wt u(x,t,\tau):= e^{-\tau \mathcal{H}}u(x,t), \text{ in }M\times \R\times (0,\infty).
		\end{equation} 
		Then $\wt u$ is a solution to 
		\begin{equation}\label{U equation}
			\begin{cases}
				\LC \p_t + \p_{\tau}-\Delta_{g}\RC  \wt u=0 &\text{ in }M\times \R\times (0,\infty), \\
				\wt u(x,t,0)=u(x,t) &\text{ in }M\times \R.
			\end{cases}
		\end{equation}
		Moreover, $\wt u(x,-T,\tau)=0$ for any $x\in M$ and $\tau \in (0,\infty)$.
		
	\end{lemma}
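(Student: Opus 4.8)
\medskip
\noindent\textbf{Proof proposal.}
The plan is to read the equation for $\wt u$ directly off the two representations of the semigroup $e^{-\tau\mathcal{H}}$ recorded in \eqref{heat evolute semigroup}, in the spirit of the Euclidean computation in \cite[Lemma 3.1]{LLU2022para}. Writing $u(x,t)=\sum_{k\geq0}u_k(t)\phi_k(x)$ with $u_k$ as in \eqref{u_k and its Fourier transf}, formula \eqref{heat evolute semigroup} reads
\begin{equation*}
\wt u(x,t,\tau)=e^{-\tau\mathcal{L}}\big(u(\cdot,t-\tau)\big)(x)=\sum_{k\geq0}e^{-\tau\lambda_k}u_k(t-\tau)\phi_k(x)=\int_M e^{-\tau\mathcal{L}}(x,z)\,u(z,t-\tau)\,dz .
\end{equation*}
Since $\tau$ enters both through the generator $-\mathcal{L}$ of $\{e^{-\tau\mathcal{L}}\}_{\tau\geq0}$ and through the time shift $t\mapsto t-\tau$, I would differentiate the spectral series termwise in $\tau$ and use $-\Delta_g\phi_k=\lambda_k\phi_k$ to get
\begin{equation*}
\p_{\tau}\wt u=-\sum_{k\geq0}\lambda_k e^{-\tau\lambda_k}u_k(t-\tau)\phi_k-\sum_{k\geq0}e^{-\tau\lambda_k}u_k'(t-\tau)\phi_k=\Delta_g\wt u-\p_t\wt u ,
\end{equation*}
which is exactly $\LC\p_t+\p_{\tau}-\Delta_g\RC\wt u=0$. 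Equivalently, with the substitution $s:=t-\tau$ the function $w(x,\tau):=e^{-\tau\mathcal{L}}\big(u(\cdot,s)\big)(x)$ solves the classical heat equation $\p_{\tau}w=\Delta_g w$ for $\tau>0$, and $\wt u(x,t,\tau)=w(x,\tau)\big|_{s=t-\tau}$, so the chain rule reproduces the same identity with $\p_t\wt u=(\p_s w)\big|_{s=t-\tau}$.

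For the initial condition $\wt u(x,t,0)=u(x,t)$ I would invoke the strong $L^2$-continuity of $\{e^{-\tau\mathcal{H}}\}_{\tau\geq0}$: by Plancherel's theorem and the spectral form of \eqref{heat evolute semigroup} one has $\norm{e^{-\tau\mathcal{H}}u-u}_{L^2(M\times\R)}^2=\int_\R\sum_{k\geq0}\big|e^{-\tau(\mathsf{i}\rho+\lambda_k)}-1\big|^2\,\big|\widehat{u_k}(\rho)\big|^2\,d\rho$, which tends to $0$ as $\tau\to0^+$ by dominated convergence — the integrand converges pointwise to $0$ and is dominated by $4\,|\widehat{u_k}(\rho)|^2$. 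For the vanishing at $t=-T$, the last expression in \eqref{heat evolute semigroup} gives $\wt u(x,-T,\tau)=\int_M e^{-\tau\mathcal{L}}(x,z)\,u(z,-T-\tau)\,dz$; since $\tau>0$ we have $-T-\tau<-T$, so the compatibility condition \eqref{compatibility condition} — which holds for the solution $u$ of \eqref{equ: frac para} to which the lemma is applied — forces $u(z,-T-\tau)=0$, and hence $\wt u(x,-T,\tau)=0$.

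The only genuinely delicate point is the termwise differentiation in $\tau$. For $u\in\mathbf{H}^s(M\times\R)$ the orbit $\tau\mapsto e^{-\tau\mathcal{H}}u$ need not be $L^2$-differentiable, since $\mathrm{Dom}(\mathcal{H})=\mathbf{H}^2(M\times\R)$, so I would read $\LC\p_t+\p_{\tau}-\Delta_g\RC\wt u=0$ in the sense of distributions on $M\times\R\times(0,\infty)$ — which is all that Section \ref{sec: inverse problem} needs. To justify this one tests against $v\in C^\infty_c(M\times\R\times(0,\infty))$ with $\tau$-support in some $[a,b]\subset(0,\infty)$: the bound $e^{-\tau\lambda_k}\leq e^{-a\lambda_k}$, together with the Gaussian estimate \eqref{Gaussian upper bound} for $e^{-\tau\mathcal{L}}(x,z)$ and its $\tau$- and $x$-derivative analogues in the kernel form, makes all interchanges of $\sum_k$, $\int$ and the derivatives $\p_t,\p_{\tau},\Delta_g$ absolutely convergent, so one integrates by parts against $v$ term by term. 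Alternatively, the change of variables $s=t-\tau$ already supplies the needed regularity — $w(x,\tau)=e^{-\tau\mathcal{L}}(u(\cdot,s))(x)$ is smooth in $x$ and in $\tau>0$ — so the identity holds classically as the equality of the characteristic derivative $(\p_t+\p_{\tau})\wt u$ with $\Delta_g\wt u$, and globally in the distributional sense. I expect this verification, rather than the formal computation itself, to be the main obstacle.
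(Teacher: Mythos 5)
Your proposal is correct and follows essentially the same route as the paper: both differentiate the representation $\wt u(x,t,\tau)=e^{-\tau\mathcal{L}}\big(u(\cdot,t-\tau)\big)(x)$ from \eqref{heat evolute semigroup} in $\tau$, use that $e^{-\tau\mathcal{L}}$ solves the heat equation (you do this termwise on the spectral series, the paper on the kernel, which is the same computation), convert the shift $t-\tau$ into $-\p_t\wt u$, and verify the initial condition and the vanishing $\wt u(x,-T,\tau)=0$ from $u=0$ for $t\leq -T$ exactly as the paper does. Your extra care about the distributional interpretation, the $L^2$-continuity as $\tau\to0^+$, and the fact that the $t=-T$ claim uses the support condition on $u$ only adds rigor the paper leaves implicit.
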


	\begin{proof}
		By using the representation formula \eqref{heat evolute semigroup} of $\wt u(x,t,\tau)$, we have 
		$$
		\wt u(x,t,\tau)=\int_{M}e^{-\tau \mathcal{L}}(x,z)u(z,t-\tau)\, dz,
		$$
		so that 
		\begin{align}\label{comp 1}
			\begin{split}
				\LC \p_\tau -\Delta_g  \RC \wt u
				&=\underbrace{ \int_{M} \left[\LC \p_\tau -\Delta_g \RC e^{-\tau \mathcal{L}}(x,z)  \right] u(t-\tau, z)\, dz}_{=0\text{ since }e^{-\tau \mathcal{L}}(x,z)\text{ is the heat kernel}} \\
				&\quad \,  +\int_{M}e^{-\tau \mathcal{L}}(x,z)\p_\tau \LC u(z,t-\tau)\RC dz \\
				&=\int_{M}e^{-\tau \mathcal{L}}(x,z)\p_\tau \LC u(t-\tau,z)\RC dz , 
			\end{split}
		\end{align}
		for $(x,t,\tau)\in M\times \R\times (0,\infty)$, where we used that $e^{-\tau \mathcal{L}}(x,z)$ is the heat kernel of $\p_{\tau}-\Delta_g$ on $M$. 
		Interchanging $\p_\tau$ and $\p_t$ in the right hand side of \eqref{comp 1}, one has 
		\begin{align*}
			\begin{split}
				\LC \p_\tau -\Delta_g \RC \wt u=-\p_t\LC \int_{M}e^{-\tau \mathcal{L}}(x,z) u(z,t-\tau) \, dz\RC=-\p_t \wt u  \text{ in }M\times \R\times (0,\infty), 
			\end{split}
		\end{align*}
		which shows the first equation of \eqref{U equation} holds.
		Moreover, it is easy to check that 
		\begin{align*}
			\wt u(x,t,0)=\lim_{\tau \to 0}\int_{M}e^{-\tau \mathcal{L}}(x,z)u(z,t-\tau)\,  dz=u(x,t), \text{ for }(x,t)\in M\times \R.
		\end{align*}		
		Now, given any $\tau >0$, we can directly see that
		\begin{align*}
			\wt u (x,-T,\tau)=\int_{M}e^{-\tau \mathcal{L}}(x,z)\underbrace{u_T(z,-T-\tau)}_{=0 \text{ for }\tau>0}\,  dz=0, \text{ for }(x,\tau)\in  M\times  (0,\infty).
		\end{align*}
		This concludes the proof.
	\end{proof}

	Let us reformulate the initial value problem \eqref{equ: frac para} into a source problem, which helps us eliminate the initial condition in \eqref{equ: frac para}. As we discussed in Section \ref{sec: well-posed}, the solution $u$ of \eqref{equ: frac para} in $\Omega_T$ is the same as the solution $u_T(x,t)=\chi_{(-T,T)}(t)u(x,t)$. Moreover, with the representation formula \eqref{representation formula of H^s} and the pass time condition in \eqref{equ: frac para} at hand, we can compute that 
	\begin{equation}\label{past memory=0}
		\begin{split}
			&\quad \, \left. \mathcal{H}^s u(x,t) \right|_{t\leq -T}\\
			&=\int_0^\infty \int_{M}  \mathcal{K}_s (x,z;\tau)\LC u(z,t-\tau) -u(x,t)\RC dz d\tau \bigg|_{\{ t\leq -T\}}\\
			&=0 \quad \text{in} \quad M,
		\end{split}
	\end{equation}
	since $u(x,t)=u(z,t-\tau)=0$ for $t\leq -T$ and for all $\tau \in (0,\infty)$. This also shows that the condition \eqref{compatibility condition} must be satisfied. In Lemma \ref{Lemma: auxiliary}, we have explained that the future data of $u|_{\{t\geq T\}}$ will not affect the solution $u$ of \eqref{equ: frac para} in the spacetime domain $M\times (-T,T)$. Hence, there would be infinitely many functions defined globally on $M\times \R$ solving the equation \eqref{equ: frac para} by varying the function $u|_{\{t\geq T\}}$.
	However, combining with \eqref{past memory=0}, let us adopt the notion for the ''uniqueness" that \eqref{equ: frac para} admits a unique solution $u_T\in \mathbf{H}^s_{M\times[-T,T]}$. In other words, the equation \eqref{equ: frac para} can be rewritten in terms of 
	\begin{equation}\label{source problem}
		\mathcal{H}^s u_T = f \text{ in }M\times (-T,T),
	\end{equation}
	for any $f\in C_c^\infty(\mathcal{O}\times (-T,T))$.

	By using the previous discussions, we can write the solution $u_T$ in \eqref{source problem} of the form 
	\begin{equation}\label{inverse of nonlocal para}
		u_T = \mathcal{H}^{-s}f \text{ in }M\times (-T,T),
	\end{equation}
	and we aim to write down a more explicit expression formula of $\mathcal{H}^{-s}$.
	
	\begin{lemma}\label{Lemma: inverse Hs}
		Given $0<s<1$, let $f=f(x,t)\in C^\infty_c(M\times (-T,T))$, then the solution $u_T$ of \eqref{source problem} is given by 
		\begin{equation}\label{inverse of Hs}
			\begin{split}
				u_T & = \frac{1}{\Gamma(s)}\int_0^\infty e^{-\tau \mathcal{H}} f \, \frac{d\tau}{\tau^{1-s}}\\
				&=\int_0^\infty \int_M \mathcal{K}_{-s}(x,z;\tau) f(z,t-\tau)\, dzd\tau ,
			\end{split}
		\end{equation}
		where $\mathcal{K}_{-s}(x,z,\tau)$ is the kernel function defined by \eqref{kernel nonlocal} whenever the exponent $s$ is replaced by $-s$.
	\end{lemma}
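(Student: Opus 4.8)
The plan is to invert the spectral symbol $(\mathsf{i}\rho+\lambda_k)^s$ of $\mathcal{H}^s$ by using the standard Gamma-function identity, in exactly the same spirit as the semigroup characterization of $\mathcal{H}^s$ in Lemma \ref{Lemma: Hs semigroup}. Concretely, for each $k\geq 0$ and $\rho\in\R$ one has $\Re(\mathsf{i}\rho+\lambda_k)=\lambda_k\geq 0$, and if $\lambda_k>0$ (or $\rho\neq 0$) the identity
\[
(\mathsf{i}\rho+\lambda_k)^{-s}=\frac{1}{\Gamma(s)}\int_0^\infty e^{-\tau(\mathsf{i}\rho+\lambda_k)}\,\frac{d\tau}{\tau^{1-s}}
\]
holds, the integral converging absolutely. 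First I would record this elementary fact (referring to the same source used for Lemma \ref{Lemma: Hs semigroup}), noting that for $f\in C^\infty_c(M\times(-T,T))$ with the compatibility condition the zeroth Fourier mode does not cause trouble: either $(f,1)_{L^2(M)}\equiv 0$ after time-Fourier transform on the relevant frequencies, or more simply one checks that $u_T=\mathcal{H}^{-s}f$ is still well-defined on the orthogonal complement of the constants together with the $\rho\neq 0$ part, which is all that \eqref{source problem} sees. In any case the compatibility condition \eqref{compatibility condition} guarantees $\mathcal{H}^{-s}f$ lands in the right space.

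Next I would apply this identity modewise to $f$: writing $\widehat{u_{T,k}}(\rho)=(\mathsf{i}\rho+\lambda_k)^{-s}\widehat{f_k}(\rho)$ and substituting the integral representation, I would use Fubini (justified by absolute convergence of $\int_0^\infty \tau^{s-1}e^{-\tau\lambda_k}\,d\tau<\infty$ for each $k$, combined with $f\in C^\infty_c$ so that $\widehat{f_k}$ decays rapidly and $\sum_k$ is controlled) to interchange $\int_0^\infty\frac{d\tau}{\tau^{1-s}}$, $\int_\R d\rho$ and $\sum_{k\geq 0}$. This recovers
\[
u_T=\frac{1}{\Gamma(s)}\int_0^\infty\Big(\sum_{k=0}^\infty e^{-\tau\lambda_k}\widehat{f_k}(\rho)\phi_k\Big)^{\vee}\,\frac{d\tau}{\tau^{1-s}}=\frac{1}{\Gamma(s)}\int_0^\infty e^{-\tau\mathcal{H}}f\,\frac{d\tau}{\tau^{1-s}},
\]
where I recognize the inner sum (after inverse time-Fourier transform) as $e^{-\tau\mathcal{H}}f$ via the spectral formula \eqref{heat evolute semigroup}. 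This gives the first equality in \eqref{inverse of Hs}.

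For the second equality I would simply plug in the explicit form of $e^{-\tau\mathcal{H}}f$ from \eqref{heat evolute semigroup}, namely $e^{-\tau\mathcal{H}}f(x,t)=\int_M e^{-\tau\mathcal{L}}(x,z)f(z,t-\tau)\,dz$, and insert the kernel notation \eqref{kernel nonlocal} with $s$ replaced by $-s$, i.e.\ $\mathcal{K}_{-s}(x,z;\tau)=\frac{1}{\Gamma(s)}\tau^{-(1-s)}e^{-\tau\mathcal{L}}(x,z)$ (noting $\Gamma(-(-s))=\Gamma(s)$ so the normalizing constant is the stated one), together with one more application of Fubini to pull the $x,z$-integral outside. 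The main obstacle is purely one of rigor rather than ideas: one must justify that $u_T$ as produced by these manipulations genuinely coincides with the Lax--Milgram solution of \eqref{source problem}, i.e.\ that $\mathcal{H}^s$ applied to it returns $f$ and that it lies in $\mathbf{H}^s(M\times\R)$ with the correct support and past-vanishing. For the mapping property one combines \eqref{mapping property} with the symbol computation $(\mathsf{i}\rho+\lambda_k)^s(\mathsf{i}\rho+\lambda_k)^{-s}=1$; the support statement follows because $e^{-\tau\mathcal{H}}f(x,t)$ only involves $f(\cdot,t-\tau)$ with $\tau>0$, so causality (Lemma \ref{Lemma: auxiliary} and the computation \eqref{past memory=0}) is preserved, and uniqueness in $\mathbf{H}^s_{M\times[-T,T]}$ from the well-posedness lemma pins down $u_T$.
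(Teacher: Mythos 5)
Your proposal follows essentially the same route as the paper: the Gamma-function identity $(\mathsf{i}\rho+\lambda_k)^{-s}=\frac{1}{\Gamma(s)}\int_0^\infty e^{-\tau(\mathsf{i}\rho+\lambda_k)}\,\tau^{s-1}\,d\tau$ applied modewise (mirroring Lemma \ref{Lemma: Hs semigroup}) for the first equality, and the heat-kernel representation \eqref{heat evolute semigroup} with the factor $\frac{1}{\Gamma(s)}\tau^{s-1}$ for the second. Your additional care about the zeroth mode, Fubini, and matching with the Lax--Milgram solution only fills in details the paper leaves implicit, so the argument is correct and in the same spirit.
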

	
	\begin{proof}
		The proof can be seen by using the formula of the Gamma function that 
		\begin{equation}
			\LC \mathsf{i}\rho + \lambda_k\RC^{-s}=\frac{1}{\Gamma(s)}\int_0^\infty e^{-\tau (\mathsf{i}\rho +\lambda_k)}\, \frac{d\tau}{\tau^{1-s}}.
		\end{equation}
		The derivation in the first identity of \eqref{inverse of Hs} is similar to the proof of Lemma \ref{Lemma: Hs semigroup}. 
		For the second equality in \eqref{inverse of Hs}, one simply uses the heat kernel $e^{-\tau \mathcal{L}}(x,z)$ combining with the factor $\frac{1}{\Gamma(s)}\frac{1}{\tau^{1-s}}$. This proves the assertion.
	\end{proof}
	
	\begin{remark}
		The first identity of \eqref{inverse of Hs} will play an essential role in the proof of Theorem \ref{thm: main}.
	\end{remark}
	
	\subsection{Proof of Theorem \ref{thm: main}}

	Now, we can prove Theorem \ref{thm: main}.
	
	\begin{proof}[Proof of Theorem \ref{thm: main}]
		Let $\omega_1,\omega_2\subset \mathcal{O}$ be nonempty open sets such that $\overline{\omega_1}\cap \overline{\omega_2}=\emptyset$. Given an arbitrary $f\in C^\infty_c(\omega_1\times (-T,T))$, the condition \eqref{same local information} implies that 
		\begin{equation}
			\LC \p_t -\Delta_{g_1}\RC^m f =\LC \p_t -\Delta_{g_2}\RC^m f =\LC \p_t -\Delta_g\RC^m f \text{ on }\omega_1 \times (-T,T),
		\end{equation}
		and $\Delta^m_gf\in C^\infty_c(\omega_1\times (-T,T))$, for any $m\in \N$. Meanwhile, using \eqref{same local source to solution}, there holds that 
		\begin{equation}\label{s-t-s_0}
			\left.\mathcal{H}_{1}^{-s} \LC \LC \p_t - \Delta_g\RC ^m f \RC \right|_{\mathcal{O}\times (-T,T)} = \left.	\mathcal{H}_{2}^{-s} \LC \LC\p_t - \Delta_g\RC^m f \RC  \right|_{\mathcal{O}\times (-T,T)},
		\end{equation}
		for any $m\in \N$, where $\mathcal{H}_{j}:= \p_t -\Delta_{g_j}$, for $j=1,2$. Note that 
		$$
		w_j(x,t):=\chi_{[-T,T]}(t)\mathcal{H}_{j}^{-s} \LC \LC \p_t - \Delta_{g_j}\RC ^m f \RC(x,t)
		$$ 
		is the solution to 
		\begin{equation}
			\mathcal{H}_j^s w_j = \LC \p_t - \Delta_{g}\RC ^m f\in C^\infty_c(\omega_1 \times (-T,T)) \quad \text{in }M\times \R,
		\end{equation}			
		for $j=1,2$ and for any $m\in \N$. Then the identity \eqref{s-t-s_0} is equivalent to 
		\begin{equation}\label{s-t-s_1}
			\left.\mathcal{H}_{1}^{-s} \LC \LC \p_t - \Delta_g\RC ^m f \RC \right|_{\mathcal{O}\times \R} = \left.	\mathcal{H}_{2}^{-s} \LC \LC\p_t - \Delta_g\RC^m f \RC  \right|_{\mathcal{O}\times \R},
		\end{equation}
		where we use the same technique by multiplying the time cutoff function $\chi_{[-T,T]}(t)$, which was shown in Section \ref{sec: well-posed}. This makes that the solution $w_j$ are supported in $M\times [-T,T]$, which are zero in the sets $M\times \LC \{t\leq -T\} \cup \{t\geq T\}\RC$, for $j=1,2$. Let us emphasize again that the future information of these solutions in the set $M\times \{t\geq T\}$ will not affect the solution in the set $M\times (-T,T)$, and hence we simply take the solution to be zero in the future time domain.
		
		By using Lemma \ref{Lemma: inverse Hs}, the above identity yields that 
		\begin{equation}\label{pf of thm_1}
			\int_0^\infty \LC e^{-\tau \mathcal{H}_1} -e^{-\tau \mathcal{H}_2}\RC  \LC \LC \p_t -\Delta_g\RC^m f \RC   \frac{d\tau}{\tau^{1-s}}=0 ,
		\end{equation}
		for $(x,t)\in \mathcal{O}\times \R$ and $m\in \N$. Furthermore, for $f\in C^\infty_c(\omega_1\times (-T,T))$ and $\frac{n+1}{2}<\ell \in \N$, the Sobolev embedding and \eqref{expoenential decay} imply 
		\begin{equation}\label{pf of thm_2}
			\begin{split}
				&\quad \, \left\| e^{-\tau \mathcal{H}_j}\LC \p_t -\Delta_g \RC^m f \right\|_{L^\infty(M_j\times \R)} \\
				& \leq C 	\left\| e^{-\tau \mathcal{H}_j}\LC \p_t -\Delta_g \RC^m f \right\|_{H^\ell(M_j\times \R)} \\
				&\leq  C 	\big\| e^{-\tau \mathcal{H}_j} \LC 1-\p_t^2 -\Delta_{g_j}\RC^\ell \LC \p_t -\Delta_g \RC^m f \big\|_{L^2(M_j\times \R)}  \\
				&\leq C e^{-c_0\tau } 	\big\| \LC 1-\p_t^2 -\Delta_{g_j}\RC^\ell \LC \p_t -\Delta_g \RC^m f \big\|_{L^2(M_j\times \R)}
			\end{split}
		\end{equation} 
		which ensures that the the integral in \eqref{pf of thm_1} convergence uniformly for $(x,t)\in \mathcal{O}\times \R$, for any $m\in \N$. Notice that in the right-hand side of \eqref{pf of thm_2}, it is a local computation, for any smooth function $f\in C^\infty_c(\omega_1\times (-T,T))$.

		Similar to \cite{LLU2022para}, using the commutative property $e^{-\tau \mathcal{H}_j} \mathcal{H}_j^m = \mathcal{H}_j^me^{-\tau \mathcal{H}_j} $, for all $\tau \geq 0 $ on $\mathrm{Dom}\LC \mathcal{H}_j^m\RC$, we can obtain
		\begin{equation}\label{pf of thm_3}
			\begin{split}
				e^{-\tau \mathcal{H}_j}\LC \LC \p_t -\Delta_g \RC^m f \RC= \LC \p_t -\Delta_g\RC^m \LC e^{-\tau \mathcal{H}_j} f \RC \underbrace{=\LC -\p_\tau \RC^m \LC e^{-\tau \mathcal{H}_j} f\RC}_{\text{By \eqref{U equation}}},
			\end{split}
		\end{equation}
		for any $m\in \N$, where $e^{-\tau \mathcal{H}_j} f$ solves \eqref{U equation} as $g=g_j$, for $j=1,2$. Inserting \eqref{pf of thm_3} into \eqref{pf of thm_1}, there holds that 
		\begin{equation}\label{pf of thm_4}
			\begin{split}
				\int_0^\infty \p_\tau^m \LC \LC e^{-\tau \mathcal{H}_1}  -e^{-\tau \mathcal{H}_2}  \RC f\RC (x,t) \frac{d\tau}{\tau^{1-s}}=0,
			\end{split}
		\end{equation}
		for $(x,t)\in \mathcal{O}\times \R$ and $m\in \N$.
		Meanwhile, as $(x,t)\in \omega_2 \times (-T,T)$, we can integrate by parts \eqref{pf of thm_4} $m$-times. Note that there are no end-point contributions while doing this integration by parts. More concretely, as showed in \cite{feizmohammadi2021fractional}, using the exponential decay \eqref{pf of thm_2} as $\tau \to \infty$ and the Gaussian upper bound \eqref{Gaussian upper bound} for the heat kernel $e^{-\tau \mathcal{L}}(x,z)$ as $\tau \to 0$, one can guarantee that 
		\begin{equation}\label{pf of thm_5}
			\begin{split}
				\lim_{\tau \to \infty} &\bigg| \p_\tau^\ell  \LC \LC e^{-\tau \mathcal{H}_1}  -e^{-\tau \mathcal{H}_2}  \RC f\RC (x,t)\frac{1}{\tau^{k-s}}\bigg|\\
				=\lim_{\tau \to 0}&\bigg| \p_\tau^\ell  \LC \LC e^{-\tau \mathcal{H}_1}  -e^{-\tau \mathcal{H}_2}  \RC f\RC (x,t) \frac{1}{\tau^{k-s}}\bigg|=0,
			\end{split}
		\end{equation}
		where we use 
		\begin{equation}\label{pf of thm_6}
			\bigg| \p_\tau^\ell  \LC \LC e^{-\tau \mathcal{H}_1}  -e^{-\tau \mathcal{H}_2}  \RC f\RC (x,t) \bigg| \leq C e^{-\frac{c}{\tau}}
		\end{equation}
		for any $k,\ell \in \N$, for any $\tau \in (0,1)$ and for some constant $c>0$.
		
		With \eqref{pf of thm_5} at hand, integrating \eqref{pf of thm_4} $m$-times, there holds that 
		\begin{equation}
			\begin{split}
				\int_0^\infty  \LC \LC e^{-\tau \mathcal{H}_1}  -e^{-\tau \mathcal{H}_2}  \RC f\RC (x,t) \frac{d\tau}{\tau^{m+1-s}}=0, \text{ for all }m\in \N,
			\end{split}
		\end{equation}			
		which  is equivalent to 
		\begin{equation}\label{pf of thm_7}
			\begin{split}
				\int_0^\infty  \LC \LC e^{-\tau \mathcal{H}_1}  -e^{-\tau \mathcal{H}_2}  \RC f\RC (x,t) \frac{d\tau}{\tau^{m+2-s}}=0, \text{ for all }m\in \N\cup \{0\}.
			\end{split}
		\end{equation}	
		By using the change of variable that $\eta=\frac{1}{\tau}$, the identity \eqref{pf of thm_7} yields that 
		\begin{equation}\label{pf of thm_8}
			\int_0^\infty \varphi (\eta) \eta^m \, d\eta  =0 ,
		\end{equation}
		for all $m\in \N$, where 
		\begin{equation}
			\varphi(\eta):=  \frac{\big( \big( e^{-\frac{1}{\eta}\mathcal{H}_1}  -e^{-\frac{1}{\eta} \mathcal{H}_2}  \big) f\big)(x,t)}{\eta^s}, \text{ for }(x,t)\in \omega_2 \times \R.
		\end{equation}
		By using \eqref{expoenential decay} and \eqref{pf of thm_6}, one can obtain that 
		\begin{equation}
			\abs{\varphi(\eta)}\leq C \frac{e^{-c\eta}}{\eta^s},
		\end{equation}
		for some constants $c,C>0$. Therefore, the Fourier transform of $\chi_{[0,\infty)}(\eta)\varphi(\eta)$ is 
		\begin{equation}
			\mathcal{F}\LC \chi_{[0,\infty)}(\eta)\varphi(\eta)\RC (\xi) = \frac{1}{\sqrt{2\pi}}\int_0^\infty \varphi(\eta)e^{-\mathsf{i}\eta\xi}\, d\eta
		\end{equation}
		is holomorphic for $\mathrm{Im} \xi <c$. Via \eqref{pf of thm_8}, it is known that $ \mathcal{F}\LC \chi_{[0,\infty)}\varphi\RC (\xi)$ equals to zero at $\xi =0$, which implies that $\varphi(\eta)\equiv 0$ for $\eta>0$. This implies that 
		\begin{equation}\label{pf of thm_9}
			\LC \LC e^{-\tau \mathcal{H}_1}  -e^{-\tau \mathcal{H}_2}  \RC f\RC (x,t)=0 \text{ for }(x,t)\in \omega_2 \times \R \text{ and }\tau>0.
		\end{equation}
		The next task is to transfer the above identity into another useful identity \eqref{same heat kernel} as we mentioned.\\

		Thanks to Lemma \ref{Lemma: double heat equation}, we know that $e^{-\tau \mathcal{H}_j} f$ solves the equation 
		\begin{equation}
			\LC \p_\tau -\Delta_{g_j} \RC  \LC e^{-\tau \mathcal{H}_j} f\RC = -\p_t \LC e^{-\tau \mathcal{H}_j} f\RC, \text{ for }(x,t,\tau)\in M\times \R\times (0,\infty),
		\end{equation}
		for $j=1,2$. Let us integrate the above equation with respect to $t\in \R$, so that 
		\begin{equation}\label{pf of thm_10}
			\begin{split}
				&\quad \, \LC \p_\tau -\Delta_{g_j} \RC F_j \\
				&= -\int_{-\infty}^\infty\p_t \LC e^{-\tau \mathcal{H}_j} f\RC dt \\
				&= \lim_{t\to -\infty} \bigg( \int_M e^{-\tau \mathcal{L}_j}(x,z) f(z,t-\tau)\, dz \bigg)  -  \lim_{t\to \infty} \bigg( \int_M e^{-\tau \mathcal{L}_j}(x,z) f(z,t-\tau)\, dz \bigg)\\
				&=0,
			\end{split}
		\end{equation}
		for all $x \in M$, $\tau >0$ and $f \in C^\infty_c( \omega_1\times (-T,T) )$, where $\mathcal{L}_j:=-\Delta_{g_j}$, and 
		$$
		F_j=F_j(x,\tau)=\int_{-\infty}^\infty \LC e^{-\tau \mathcal{H}_j} f\RC dt, \text{ for }j=1,2.
		$$
		In addition, observe the function $F_j(x,\tau)$ more carefully, one can rewrite $F_j$ in terms of 
		\begin{equation}\label{pf of thm_11}
			\begin{split}
				F_j  (x,\tau)&=\int_{-\infty}^\infty \LC e^{-\tau \mathcal{H}_j} f\RC (x,t)\, dt \\
				&= \int_{-\infty}^\infty \int_M e^{-\tau\mathcal{L}_j}(x,z) f(z,t-\tau)\, dz dt \\
				&\underbrace{= \int_M e^{-\tau\mathcal{L}_j}(x,z) \LC \int_{-\infty}^\infty f(z,t-\tau)\, dt \RC dz}_{\text{By the Fubini (Tonelli) theorem}}\\
				&= \LC e^{-\tau \mathcal{L}_j} \mathbf{f}\RC (x),
			\end{split}
		\end{equation}
		for $j=1,2$, which are the well-defined, since the right-hand side in \eqref{pf of thm_11} is the heat kernel representation formula as $\mathbf{f}\in C^\infty_c(\omega_1)$. Here $e^{-\tau\mathcal{L}_j}(x,z)\geq 0$ is the heat kernel of $\p_\tau +\mathcal{L}_j$ for $j=1,2$, and 
		$$
		\mathbf{f}(z):=\int_{-\infty}^\infty f(z,t-\tau)\, dt\in C^\infty_c(\omega_1),
		$$ 
		since $f\in C^\infty_c(\omega_1 \times (-T,T))$. 
		Due to arbitrariness of $f(z,t-\tau)\in C^\infty_c(\omega_1\times (-T,T))$, the function $\mathbf{f}(z)\in C^\infty_c(\omega_1)$ can be also arbitrary\footnote{This can be easily seen by the following simple observations: Given any $\mathbf{f}\in C^\infty_c(\omega_1)$, choosing $f(z,t-\tau):=\mathbf{f}(z)h(t-\tau)$, where $h(\cdot) \in C^\infty_c(-T,T)$ such that $\int_{-\infty}^\infty h(t)\, dt= \int_{-\infty}^\infty h(t-\tau)\, dt=1$, for all $\tau>0$.}.

		Now, on the one hand, integrate \eqref{pf of thm_9} with respect to $t\in (-\infty,\infty)$, plugging \eqref{pf of thm_11} into \eqref{pf of thm_9} (after $t$-integration), then there holds that 
		\begin{equation}
			\LC \LC e^{-\tau \mathcal{L}_1}  -e^{-\tau \mathcal{L}_2}  \RC \mathbf{f}\RC (x)=0 \text{ for }x\in \omega_2 \text{ and }\tau>0.
		\end{equation}
		On the other hand, without loss of generality, we may assume that $\mathcal{O}$ is connected, which is contained in a single coordinate patch for both manifolds $M_1$ and $M_2$. Note that the function $e^{-\tau \mathcal{L}_j}\mathbf{f}$ solves the heat equation \eqref{pf of thm_10}, for $j=1,2$, which implies that $ \LC e^{-\tau \mathcal{L}_1}  -e^{-\tau \mathcal{L}_2}  \RC \mathbf{f}$ solves the heat equation 
		\begin{equation}
			\LC \p_\tau -\Delta_g \RC  \LC \LC e^{-\tau \mathcal{L}_1}  -e^{-\tau \mathcal{L}_2}  \RC \mathbf{f} \RC=0 \text{ for }(x,\tau)\in \mathcal{O}\times (0,\infty).
		\end{equation}
		Then the unique continuation property of the heat equation (cf. \cite{Lin1990}) implies
		\begin{equation}
			\LC \LC e^{-\tau \mathcal{L}_1}  -e^{-\tau \mathcal{L}_2}  \RC \mathbf{f} \RC =0 \text{ for }(x,\tau)\in \mathcal{O}\times (0,\infty).
		\end{equation}
		Now, as $f(x,t)\in C^\infty_c(\omega_1\times (-T,T))$ and $\omega_1\Subset \mathcal{O}$ arbitrary, so $\mathbf{f}(x)\in C^\infty_c(\omega_1)$ and $\omega_1$ can be also arbitrary.

		Therefore, one can obtain 
		\begin{equation}
			\left. e^{-\tau \mathcal{L}_1} \mathbf{f} \right|_{\mathcal{O}}=\left.	e^{-\tau \mathcal{L}_2} \mathbf{f}\right|_{\mathcal{O}}, \text{ for }\tau>0.
		\end{equation}
		Due to the arbitrariness of $\mathbf{f}\in C^\infty_c(\mathcal{O})$, the above identity implies 
		\begin{equation}
			e^{-\tau \mathcal{L}_1}(x,z)=e^{-\tau \mathcal{L}_2}(x,z), \text{ for }x,z \in \mathcal{O} \text{ and }\tau>0,
		\end{equation}
		i.e., \eqref{same heat kernel} holds. Finally, applying \cite[Theorem 1.5]{feizmohammadi2021fractional}, one can find a diffeomorphism $\Phi:M_1\to M_2$ such that $\Phi^\ast g_2=g_1$ on $M_1$ with $\Phi=\mathrm{Id}$ on $\mathcal{O}$. This proves the assertion.		
	\end{proof}

	\bigskip

	\noindent\textbf{Acknowledgments.} 
	The author is partially supported by the National Science and Technology Council (NSTC) Taiwan, under the projects 113-2628-M-A49-003 \& 113-2115-M-A49-017-MY3. The author is also a Humboldt research fellow (for experienced researchers) in Germany.

	\bibliography{refs} 

\begin{thebibliography}{BDLCS21}

\bibitem[BDLCS21]{BDLCS2021harnack}
Animesh Biswas, Marta De~Le\'{o}n-Contreras, and Pablo~Ra\'{u}l Stinga.
\newblock Harnack inequalities and {H}\"{o}lder estimates for master equations.
\newblock {\em SIAM Journal on Mathematical Analysis}, 53(2):2319--2348, 2021.

\bibitem[BS24]{BS2022calderon}
Agnid Banerjee and Soumen Senapati.
\newblock The {C}alder\'on problem for space-time fractional parabolic
  operators with variable coefficients.
\newblock {\em SIAM J. Math. Anal.}, 56(4):4759--4810, 2024.

\bibitem[Cal06]{calderon}
Alberto~P. Calder\'{o}n.
\newblock On an inverse boundary value problem.
\newblock {\em Comput. Appl. Math.}, 25(2-3):133--138, 2006.

\bibitem[CGRU23]{CGRU2023reduction}
Giovanni Covi, Tuhin Ghosh, Angkana R{\"u}land, and Gunther Uhlmann.
\newblock A reduction of the fractional {C}alder\'on problem to the local
  {C}alder\'on problem by means of the {C}affarelli-{S}ilvestre extension.
\newblock {\em arXiv preprint arXiv:2305.04227}, 2023.

\bibitem[CLL19]{CLL2017simultaneously}
Xinlin Cao, Yi-Hsuan Lin, and Hongyu Liu.
\newblock Simultaneously recovering potentials and embedded obstacles for
  anisotropic fractional {S}chr\"{o}dinger operators.
\newblock {\em Inverse Probl. Imaging}, 13(1):197--210, 2019.

\bibitem[CLR20]{cekic2020calderon}
Mihajlo Cekic, Yi-Hsuan Lin, and Angkana R{\"u}land.
\newblock The {C}alder{\'o}n problem for the fractional {S}chr{\"o}dinger
  equation with drift.
\newblock {\em Cal. Var. Partial Differential Equations}, 59(91), 2020.

\bibitem[CMR21]{CMR20}
Giovanni Covi, Keijo M\"{o}nkk\"{o}nen, and Jesse Railo.
\newblock Unique continuation property and {P}oincar\'{e} inequality for higher
  order fractional {L}aplacians with applications in inverse problems.
\newblock {\em Inverse Probl. Imaging}, 15(4):641--681, 2021.

\bibitem[CMRU22]{CMRU20}
Giovanni Covi, Keijo M\"{o}nkk\"{o}nen, Jesse Railo, and Gunther Uhlmann.
\newblock The higher order fractional {C}alder\'{o}n problem for linear local
  operators: {U}niqueness.
\newblock {\em Adv. Math.}, 399:Paper No. 108246, 2022.

\bibitem[Fei24]{feizmohammadi2021fractional_closed}
Ali Feizmohammadi.
\newblock Fractional {C}alder\'on problem on a closed {R}iemannian manifold.
\newblock {\em Trans. Amer. Math. Soc.}, 377(4):2991--3013, 2024.

\bibitem[FGKU24]{feizmohammadi2021fractional}
Ali Feizmohammadi, Tuhin Ghosh, Katya Krupchyk, and Gunther Uhlmann.
\newblock Fractional anisotropic {C}alder\'on problem on closed {R}iemannian
  manifolds.
\newblock {\em J. Diff. Geom., to appear}, 2024.

\bibitem[FKU24]{FKU2024calder}
Ali Feizmohammadi, Katya Krupchyk, and Gunther Uhlmann.
\newblock Calder\'on problem for fractional {S}chr\"odinger operators on closed
  {R}iemannian manifolds.
\newblock {\em arXiv preprint arXiv:2407.16866}, 2024.

\bibitem[GLX17]{GLX}
Tuhin Ghosh, Yi-Hsuan Lin, and Jingni Xiao.
\newblock The {C}alder\'{o}n problem for variable coefficients nonlocal
  elliptic operators.
\newblock {\em Comm. Partial Differential Equations}, 42(12):1923--1961, 2017.

\bibitem[GRSU20]{GRSU20}
Tuhin Ghosh, Angkana R\"{u}land, Mikko Salo, and Gunther Uhlmann.
\newblock Uniqueness and reconstruction for the fractional {C}alder\'{o}n
  problem with a single measurement.
\newblock {\em J. Funct. Anal.}, 279(1):108505, 42, 2020.

\bibitem[GSU20]{GSU20}
Tuhin Ghosh, Mikko Salo, and Gunther Uhlmann.
\newblock The {C}alder\'{o}n problem for the fractional {S}chr\"{o}dinger
  equation.
\newblock {\em Anal. PDE}, 13(2):455--475, 2020.

\bibitem[GU21]{GU2021calder}
Tuhin Ghosh and Gunther Uhlmann.
\newblock The {C}alder\'{o}n problem for nonlocal operators.
\newblock {\em arXiv:2110.09265}, 2021.

\bibitem[HL19]{HL19_monotone1}
Bastian Harrach and Yi-Hsuan Lin.
\newblock Monotonicity-based inversion of the fractional {S}chr\"odinger
  equation {I}. {P}ositive potentials.
\newblock {\em SIAM J. Math. Anal.}, 51(4):3092--3111, 2019.

\bibitem[HL20]{HL20_monotone2}
Bastian Harrach and Yi-Hsuan Lin.
\newblock Monotonicity-based inversion of the fractional {S}ch\"odinger
  equation {II}. {G}eneral potentials and stability.
\newblock {\em SIAM J. Math. Anal.}, 52(1):402--436, 2020.

\bibitem[KLW22]{KLW2021calder}
Pu-Zhao Kow, Yi-Hsuan Lin, and Jenn-Nan Wang.
\newblock The {C}alder\'{o}n problem for the fractional wave equation:
  uniqueness and optimal stability.
\newblock {\em SIAM J. Math. Anal.}, 54(3):3379--3419, 2022.

\bibitem[KLZ24]{KLZ-2022}
Manas Kar, Yi-Hsuan Lin, and Philipp Zimmermann.
\newblock Determining coefficients for a fractional {$p$}-{L}aplace equation
  from exterior measurements.
\newblock {\em J. Differential Equations}, 406:338--365, 2024.

\bibitem[KRZ23]{KRZ-2023}
Manas Kar, Jesse Railo, and Philipp Zimmermann.
\newblock The fractional {$p\,$}-biharmonic systems: optimal {P}oincar\'{e}
  constants, unique continuation and inverse problems.
\newblock {\em Calc. Var. Partial Differential Equations}, 62(4):Paper No. 130,
  36, 2023.

\bibitem[Lin90]{Lin1990}
Fang-Hua Lin.
\newblock A uniqueness theorem for parabolic equations.
\newblock {\em Comm. Pure Appl. Math.}, 43(1):127--136, 1990.

\bibitem[Lin22]{lin2020monotonicity}
Yi-Hsuan Lin.
\newblock Monotonicity-based inversion of fractional semilinear elliptic
  equations with power type nonlinearities.
\newblock {\em Calc. Var. Partial Differential Equations}, 61(5):Paper No. 188,
  30, 2022.

\bibitem[Lin23]{lin2023determining}
Yi-Hsuan Lin.
\newblock Determining both leading coefficient and source in a nonlocal
  elliptic equation.
\newblock {\em arXiv preprint arXiv:2312.15607}, 2023.

\bibitem[LL22]{LL2020inverse}
Ru-Yu Lai and Yi-Hsuan Lin.
\newblock Inverse problems for fractional semilinear elliptic equations.
\newblock {\em Nonlinear Anal.}, 216:Paper No. 112699, 21, 2022.

\bibitem[LL23]{LL2022inverse}
Yi-Hsuan Lin and Hongyu Liu.
\newblock Inverse problems for fractional equations with a minimal number of
  measurements.
\newblock {\em Communications and Computational Analysis}, 1:72--93, 2023.

\bibitem[LLR20]{LLR2019calder}
Ru-Yu Lai, Yi-Hsuan Lin, and Angkana R\"{u}land.
\newblock The {C}alder\'{o}n problem for a space-time fractional parabolic
  equation.
\newblock {\em SIAM J. Math. Anal.}, 52(3):2655--2688, 2020.

\bibitem[LLS20]{LLS2020_Poisson}
Matti Lassas, Tony Liimatainen, and Mikko Salo.
\newblock The {P}oisson embedding approach to the {C}alder\'on problem.
\newblock {\em Math. Ann.}, 377(1-2):19--67, 2020.

\bibitem[LLU22]{LLU2022para}
Ching-Lung Lin, Yi-Hsuan Lin, and Gunther Uhlmann.
\newblock The {C}alder\'{o}n problem for nonlocal parabolic operators.
\newblock {\em arXiv preprint arXiv:2209.11157}, 2022.

\bibitem[LLU23]{LLU2023calder}
Ching-Lung Lin, Yi-Hsuan Lin, and Gunther Uhlmann.
\newblock The {C}alder\'{o}n problem for nonlocal parabolic operators: {A} new
  reduction from the nonlocal to the local.
\newblock {\em arXiv preprint arXiv:2308.09654}, 2023.

\bibitem[LNZ24]{LNZ_Calderon}
Yi-Hsuan Lin, Gen Nakamura, and Philipp Zimmermann.
\newblock The {C}alder\'on problem for the schr\"odinger equation in
  transversally anisotropic geometries with partial data.
\newblock {\em arXiv preprint arXiv:2408.08298}, 2024.

\bibitem[LTZ24a]{LTZ24_optimal}
Yi-Hsuan Lin, Teemu Tyni, and Philipp Zimmermann.
\newblock Optimal runge approximation for nonlocal wave equations and unique
  determination of polyhomogeneous nonlinearities.
\newblock {\em arXiv preprint arXiv:2408.13869}, 2024.

\bibitem[LTZ24b]{LTZ24_well-posed}
Yi-Hsuan Lin, Teemu Tyni, and Philipp Zimmermann.
\newblock Well-posedness and inverse problems for semilinear nonlocal wave
  equations.
\newblock {\em Nonlinear Anal.}, 247:Paper No. 113601, 19, 2024.

\bibitem[LU89]{LU1989}
John~M. Lee and Gunther Uhlmann.
\newblock Determining anisotropic real-analytic conductivities by boundary
  measurements.
\newblock {\em Comm. Pure Appl. Math.}, 42(8):1097--1112, 1989.

\bibitem[LU01]{LU01}
Matti Lassas and Gunther Uhlmann.
\newblock On determining a {R}iemannian manifold from the
  {D}irichlet-to-{N}eumann map.
\newblock {\em Ann. Sci. \'Ecole Norm. Sup. (4)}, 34(5):771--787, 2001.

\bibitem[LZ23]{LZ2023unique}
Yi-Hsuan Lin and Philipp Zimmermann.
\newblock Unique determination of coefficients and kernel in nonlocal porous
  medium equations with absorption term.
\newblock {\em arXiv preprint arXiv:2305.16282}, 2023.

\bibitem[LZ24]{LZ2024approximation}
Yi-Hsuan Lin and Philipp Zimmermann.
\newblock Approximation and uniqueness results for the nonlocal diffuse optical
  tomography problem.
\newblock {\em arXiv preprint arXiv:2406.06226}, 2024.

\bibitem[RS18]{ruland2018exponential}
Angkana R\"{u}land and Mikko Salo.
\newblock Exponential instability in the fractional {C}alder\'{o}n problem.
\newblock {\em Inverse Problems}, 34(4):045003, 21, 2018.

\bibitem[RS20]{RS17}
Angkana R\"{u}land and Mikko Salo.
\newblock The fractional {C}alder\'{o}n problem: low regularity and stability.
\newblock {\em Nonlinear Anal.}, 193:111529, 56, 2020.

\bibitem[R{\"u}l23]{ruland2023revisiting}
Angkana R{\"u}land.
\newblock Revisiting the anisotropic fractional {C}alder\'on problem using the
  {C}affarelli-{S}ilvestre extension.
\newblock {\em arXiv preprint arXiv:2309.00858}, 2023.

\bibitem[Sam02]{Samko02}
Stefan~G. Samko.
\newblock {\em Hypersingular integrals and their applications}, volume~5 of
  {\em Analytical Methods and Special Functions}.
\newblock Taylor \& Francis Group, London, 2002.

\bibitem[SY94]{SY_lecture}
Richard~M Schoen and Shing-Tung Yau.
\newblock {\em Lectures on differential geometry}, volume~I of {\em Conference
  Proceedings and Lecture Notes in Geometry and Topology}.
\newblock International Press, Cambridge, MA, 1994.
\newblock Lecture notes prepared by Wei Yue Ding, Kung Ching Chang [Gong Qing
  Zhang], Jia Qing Zhong and Yi Chao Xu, Translated from the Chinese by Ding
  and S. Y. Cheng, With a preface translated from the Chinese by Kaising Tso.

\end{thebibliography}
	
	\bibliographystyle{alpha}

\end{document}